\newcommand{\vertiii}[1]{{\left\vert\kern-0.25ex\left\vert\kern-0.25ex\left\vert #1 
    \right\vert\kern-0.25ex\right\vert\kern-0.25ex\right\vert}}
\newcommand{\Smooth}{\ensuremath{\mathcal{S}\!mooth}\xspace}
\newcommand{\Prolongate}{\ensuremath{\mathcal{P}\!rolongate}\xspace}
\newcommand{\Solve}{\ensuremath{\mathcal{S}\!olve}\xspace}
\newcommand{\Estimate}{\ensuremath{\mathcal{E}\!stimate}\xspace}
\newcommand{\Mark}{\ensuremath{\mathcal{M}ark}\xspace}
\newcommand{\Refine}{\ensuremath{\mathcal{R}efine}\xspace}
\newcommand{\dealii}{\texttt{deal.II}\xspace}
\pgfplotsset{compat=1.14}
\title{Quasi-optimal mesh sequence construction through
  Smoothed Adaptive Finite Element Methods}
\author{Ornela Mulita\thanks{Corresponding Author: \email{omulita@sissa.it}, Scuola Internazionale Superiore di Studi Avanzati, Via Bonomea 265, 34136 Trieste, IT}
  \and
  Stefano Giani\thanks{\email{stefano.giani@durham.ac.uk}, Durham University, Stockton Road, Durham, DH1 3LE UK}
  \and
  Luca Heltai \thanks{\email{luca.heltai@sissa.it}, Scuola Internazionale Superiore di Studi Avanzati, Via Bonomea 265, 34136 Trieste, IT}}
\begin{document}
\maketitle

\begin{abstract}
We propose a new algorithm for Adaptive Finite Element Methods (AFEMs) based on smoothing iterations (S-AFEM), for linear, second-order, elliptic partial differential equations (PDEs). The algorithm is inspired by the ascending phase of the V-cycle multigrid method: 
we replace accurate algebraic solutions in intermediate cycles of the classical AFEM with the application of a prolongation step, followed by the application of a smoother.
Even though these intermediate solutions are far from the exact algebraic solutions, their a-posteriori error estimation produces a refinement pattern that is substantially equivalent to the one that would be generated by classical AFEM, at a considerable fraction of the computational cost. 

We provide a qualitative analysis of how the error propagates throughout the algorithm, and we present a series of numerical experiments that highlight the efficiency and the computational speedup of S-AFEM.
\end{abstract}

\begin{keyword}
adaptive mesh refinement, finite element method, second-order elliptic PDEs, a posteriori error analysis,
inexact algebraic solution,
iterative solvers, smoothing iterations, grid construction. 
\end{keyword}

\begin{AMS}
  65N15, 65N22, 65N30, 65N50, 65N55
\end{AMS}

\section{Introduction}
The efficient numerical simulation of complex real-world phenomena requires the use of computationally affordable discrete models. 
The adaptive finite element method (AFEM) is one such a scheme  for the numerical solution of partial differential equations (PDEs) in computational sciences and engineering.
In finite element simulations (FEM), the domain of the PDE is discretized into a large set of small and simple domains (the cells or elements) depending on a size parameter $h>0$, and the PDE is transformed into an algebraic system of equations.
Rigorous analysis of the numerical method allows one to estimate the discretization error both a priori (giving global bounds on the total discretization error that depend on a global size parameter h), and a posteriori (providing a local distribution of the error on the discretized mesh in terms of known quantities). 
Classical AFEM consists of successive loops of the steps 
 $
\Solve \longrightarrow \Estimate \longrightarrow \Mark \longrightarrow  \Refine
 $
 to decrease the total discretization error, by repeating the FEM solution process (\Solve) on a mesh that has been refined (\Refine) where the a-posteriori analysis (\Estimate) has shown that the error is larger (\Mark).

Intermediate solution steps are \textit{instrumental} for the construction of the finally adapted grid, and play no role in the final solution, which is the only one that is retained for analysis and processing.

In this work we present a simple yet effective algorithm to reduce the overall computational cost of the AFEM algorithm, by providing a fast procedure for the construction of quasi-optimal mesh sequences that do not require the exact solution of the algebraic problem in the intermediate loops of AFEM. 

We refer to this new algorithm as \textit{Smoothed Adaptive Finite Element} (S-AFEM) method: the \Solve step of AFEM in all intermediate loops is replaced by the application of a prolongation step (\Prolongate), followed by the application of a smoother (\Smooth):

\tikzset{
  block/.style = {fill=white, minimum height=3em, minimum width=3em},
  }
 \begin{center}
  \begin{tikzpicture}[auto, node distance=2cm,>=latex']
    \node [block, name=firstsolve] (firstsolve) {\Solve};
    \node [block, right of=firstsolve] (estimate) {\Estimate};
    \node [block, right of=estimate] (mark) {\Mark};
    \node [block, right of=mark] (refine) {\Refine};
    \node [block, below of=refine, node distance=2.3cm] (prolongate){\Prolongate};
    \node [block, below of=estimate, node distance=2.3cm] (smooth){\Smooth};
    \node [block, right of=refine] (lastsolve) {\Solve};
    \draw [->] (firstsolve) -- (estimate);
    \draw [->] (estimate) -- (mark);
    \draw [->] (mark) -- (refine);
    \draw [->] (refine) -- (lastsolve);
    \draw [->] (refine) -- (prolongate);
    \draw [->] (prolongate) -- (smooth);
    \draw [->] (smooth)--(estimate);
  \end{tikzpicture}
 \end{center}

S-AFEM takes its inspiration from the ascending phase of the V-cycle multigrid method where a sequence of prolongation and smoothing steps is applied to what is considered an algebraically exact solution at the coarsest level. In the multigrid literature, this procedure is used to transfer the low frequency information contained in the coarse solution to a finer --nested-- grid, where some steps of a smoothing iteration are applied in order to improve the accuracy of the solution in the high frequency range (see, for example, the classical references~\cite{hackbusch1994iterative, hackbusch2013multi, xu1992iterative, bramble2000analysis, bramble2018multigrid}). The iteration of this procedure is very effective in providing accurate algebraic solutions in $O(N)$ time, where $N$ is the dimension of the final algebraic system. Even a small number of smoothing iterations is sufficient to eliminate the high frequency error, while the prolongation from coarser grids guarantees the convergence in the low frequency regime, resulting in an overall accurate solution, also when local refinement is present (see, for example,~\cite{janssen2011adaptive}).

The classical AFEM algorithm generates nested grids and subspaces\footnote{During AFEM, the grids remain nested if no de-refinement occurs. In the following we will work under this assumption.}, but the construction of the next (unknown) grid in the sequence still requires an exact algebraic solution on the current grid to trigger the \Estimate-\Mark-\Refine steps. In this paper we show, however, that in many practical situations it is not necessary to use a fully resolved solution in the intermediate steps in order to obtain a good refinement pattern: numerical evidence shows that it is sufficient for the high frequencies of the error to be dumped, in order to identify the next grid in the sequence through the \Estimate-\Mark steps. In this context, the construction of a grid with excellent approximation properties may require as little as a single ascending step of a V-cycle multigrid method.

It is still not clear how to explain rigorously why the sequence of meshes constructed with S-AFEM is close to the one obtained by classical AFEM. In this work we examine the numerical behaviour of S-AFEM in a family of linear second order elliptic problems, and provide a qualitative analysis, based on classical results from the AFEM and multigrid literature, to point the reader towards currently open questions, and towards possible paths to complete the analysis.

In particular, we consider conforming discretizations of a Poisson problem, and of a class of drift-diffusion problems in 2D and 3D, and we show that
 \begin{itemize}
     \item[$\star$] 
the a-posteriori error estimator applied to the outcome of a single ascending phase of the V-cycle multigrid method triggers a \Mark step where the refinement pattern is substantially equivalent to the one that would be generated by a classical \Solve step, at a considerable fraction of the computational cost;
 \item[$\star$] even if the final grid is not exactly identical to the one that would be obtained with the classical AFEM, the accuracy of the final solution is comparable in most cases;
\item[$\star$] the S-AFEM algorithm is robust with respect to different smoothers, and with respect to different discretization degrees.
\end{itemize}

The article is organised as follows: we start by describing the general S-AFEM algorithm in Section~\ref{sec:s-afem}, where the main algorithm is exposed. The connection with the multilevel framework and with classical a posteriori error analysis is made in Section~\ref{sec:qualitative-analysis}, where a qualitative analysis of S-AFEM is presented. Section~\ref{numericalvalidation} is dedicated to the numerical validation of the algorithm, and presents a detailed campaign of simulations that shows when the S-AFEM algorithm can be used successfully. Finally, in Section~\ref{conclusions} we provide some conclusions and perspectives for future works.

\section{The S-AFEM algorithm}
\label{sec:s-afem}
We consider a class of linear elliptic, second-order, boundary value problems (BVPs), whose variational formulation reads: seek $u\in V$ s.t.  $\mathcal{A}u=f$ in $V$ under suitable boundary conditions, where $(V, \| \bullet \|)$ is a normed Hilbert space defined over a Lipschitz bounded domain $\Omega$, the linear operator $\mathcal{A}:V\rightarrow V^{\star}$ is a second order elliptic operator, and $f \in V^{\star}$ is a given datum. The finite element method provides numerical solutions to the above problem in a finite dimensional solution space $V_h \subset V$, typically made up by continuous and piecewise polynomial functions, and transforms the continuous problem above in a discrete model of type $\mathcal{A}_h u_h=f_h$ in $V_h$ under suitable boundary conditions, where, e.g., $\mathcal{A}_h = \mathcal{A}\mid_{V_h}.$ The overall procedure leads to the solution of a (potentially very large) linear algebraic system of equations of type $A \mathbf{u} = \mathbf{f}$ in $\mathbb{R}^N$, where $N=\dim(V_h)$.

Given an initial (coarse) triangulation $\mathcal{T}_1$, we consider a (a priori unknown) nested sequence of shape regular triangulations $\mathcal{T}_k$, for $k= 1, \dots, \bar{k}$, which induces a nested sequence of  finite element spaces
\begin{equation} \label{multilevel}
V_1 \subset V_2 \subset \dots \subset V_{\bar{k}} \equiv V_h,
\end{equation}
on which we define standard prolongation operators, considering the canonical embedding $ i_k^{k+1}: V_k \hookrightarrow V_{k+1}$ that embeds functions $u_k \in V_k$  in the space $V_{k+1}$. We denote by $I_{k}^{k+1}: \mathbb{R}^{N_{k}} \rightarrow \mathbb{R}^{N_{k+1}}$ the corresponding discrete matrices and we let $N_k:=  \dim (V_k),$ for $k=1, \dots, \bar{k}$. 

The sequence of grids and the solution on the finest grid is computed with the S-AFEM algorithm, defined as:

\begin{algorithm}\label{smoothedalgorithm}
Starting from an initial coarse mesh $\mathcal{T}_1,$ Solve $A_1 \mathbf{u}_1=\mathbf{f}_1$ in $\mathbb{R}^{N_1}$ to high accuracy and generate $\mathbf{u}_1$. Then, do steps $1.-4.$ for $k=2, \dots, \bar{k}-1$ or until criterion met
\begin{enumerate}
\item \Smooth: Compute $\ell$ smoothing iterations on the discrete system $A_k \mathbf{u}_k=\mathbf{f}_k$, with initial guess $\mathbf{u}^{(0)}_k:= I^k_{k-1} \mathbf{u}^{(\ell)}_{k-1}$, which produce $\mathbf{u}_k^{(\ell)} \in \mathbb{R}^{N_k}$ (take $\mathbf{u}^{(\ell)}_1= \mathbf{u}_1$).
\item \Estimate: Compute estimators $\eta_T(u^{\ell}_k)$ for all elements $T \in \mathcal{T}_k.$
\item \Mark: Choose a set of cells to refine $\mathcal{M}_k \subset \mathcal{T}_k$ based on $\eta_T(u^{\ell}_k).$
\item \Refine: Generate new mesh $\mathcal{T}_{k+1}$ by refinement of the cells in $\mathcal{M}_k.$
\end{enumerate}
Step $k=\bar{k}$: Solve the discrete system $A_{\bar{k}} \mathbf{u}_{\bar{k}}=\mathbf{f}_{\bar{k}}$ to the desired algebraic accuracy. \\
\emph{Output}: sequence of meshes $\mathcal{T}_{k}$, smoothed approximations $\mathbf{u}^{\ell}_{k}$, estimators $\eta(u^{\ell}_k)$, and final adapted-approximation $\mathbf{u}^{\ell}_{\bar{k}}$ such that $\| \mathbf{e}_{\bar{k}}\| \le tol$.
\end{algorithm}

The motivation behind the strategy at the base of S-AFEM is the numerical observation that classical residual-based a posteriori error estimators  \cite{verfurth1996review} used in the \Estimate step are mostly insensitive to low frequencies in the solution, as shown in Figure~\ref{fig:corner-2d-intro} for a benchmark example. 
   \begin{SCfigure}
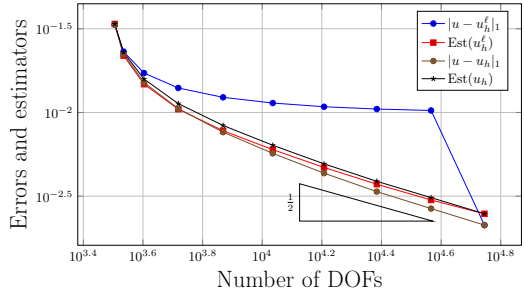
\resizebox{.55\linewidth}{!}{
\ErrorAndEstimatorSAFEMM{corner_2d_error}{
    \logLogSlopeTriangleReversed{0.8}{0.3}{0.1}{1/2}{black}{$\frac12$}; }}
  \caption{\footnotesize The values of the total error energy norm and of the error estimator for each loop of the classical AFEM $(| u-u_h|_1$ and $\text{Est}(u_h))$ and S-AFEM with $\ell=2$ smoothing iterations ($| u-u^{\ell}_h|_1$ and  $\text{Est}(u^{\ell}_h)$) for a classical Poisson problem on an L-shaped domain in 2D. The first and last loop are solved exactly by both methods.}
  \label{fig:corner-2d-intro}
\end{SCfigure} Their application to very inaccurate approximate solutions in the intermediate loops -- only capturing high frequency oscillations through a smoother -- produces an equally good grid refinement pattern at each step of the S-AFEM algorithm, with an accuracy on the final approximation step that is comparable to the one obtained with the classical AFEM algorithm, at a fraction of the computational cost.

In the S-AFEM algorithm, we capture the smoothest (i.e. less oscillatory) part of the discrete approximation in the first loop ($k=1$), by solving the discrete system exactly on the coarsest level. As the mesh is locally refined from one level to the other, we increase the higher portion of the spectrum of the matrix $A_k$. Thanks to the structure of the refinement in typical finite element methods, mostly high frequencies are added to the system, while low frequencies are substantially left unaltered.

Even though the distance between the algebraic approximation $u_h^\ell$ (coming from S-AFEM algorithm at step $k$) and the exact solution $u$ drifts away during the various steps of the algorithm (curve $|u-u_h^\ell|_1$ in Figure~\ref{fig:corner-2d-intro}), the error estimator evaluated on $u_h^\ell$ remains substantially attached to the error estimator evaluated on an algebraically exact solution $u_h$ computed on the same mesh (comparison between curves Est($u_h^\ell$) and Est($u_h$) in Figure~\ref{fig:corner-2d-intro}).

The first and last loops of the S-AFEM algorithm coincide with those of the classical AFEM. In intermediate S-AFEM loops, however, the solution $u_h^\ell$ is far from the exact algebraic solution $u_h$. These intermediate solutions serve
solely to the construction of the final grid, and find no other use in the final computations, therefore their inexactness is irrelevant, provided that the finally adapted grid provides a good approximation. Their role is \textit{instrumental} in triggering the
$\Estimate-\Mark-\Refine$ steps.

In our setting, intermediate steps are only computed through a fixed number of smoothing iterations, and have non-negligible algebraic errors. This is in contrast with the common practical assumption made in AFEM, where it is assumed that the \Solve step produces the \textit{exact} solution of the algebraic system. Recent developments dedicated a great deal of effort to account for inexactness of the algebraic approximations and introduce stopping criteria based on the interplay between discretization and algebraic computation in adaptive FEM. Among others, we mention the seminal contributions  \cite{becker1995adaptive, jiranek2010posteriori, arioli2013stopping, arioli2013interplay, papevz2018estimating, miracci2019multilevel,papevz2017residual,papevz2017sharp, daniel2020guaranteed, papevz2014distribution, MALLIK2020112367, daniel2020adaptive}. 

Nevertheless, most of this literature focuses on ways to estimate the algebraic error, without really exploiting the other side of the coin: inexact approximate solutions, with large algebraic error, offer large computational savings when used in the correct way. S-AFEM provides a good strategy to exploit this fact.

\section{Qualitative analysis of S-AFEM}
\label{sec:qualitative-analysis}
To fix the ideas, in this section we apply S-AFEM to a model Poisson problem with Richardson iteration as a smoother and discuss the interplay between the algebraic solution in intermediate steps, and the classical a-posteriori error estimator theory. 

A larger selection of problem types and smoother algorithms is tested in Section~\ref{numericalvalidation} where we compare the application of a fixed number of Richardson iterations or of the conjugate gradient (CG) method for symmetric systems, and of the generalized minimal residual method (GMRES) method for non-symmetric systems coming from the discretization of drift-diffusion problems.

Let $\Omega \subset \mathbb{R}^d$ $(d=1,2,3)$ be a bounded, polygonal domain (an open and connected set with polygonal boundary) with Lebesgue and Sobolev spaces $L^2(\Omega)$ and $H^1_0(\Omega)$. We look for the solution $u \in H^1_{0}(\Omega)$ such that
\begin{equation} \label{modelproblem}
- \Delta u  = f \,\,\, \textnormal{in} \,\,\, \Omega \,\,\, \textnormal{and}\,\,\,
 u= 0 \,\, \textnormal{on}\,\, \partial \Omega,
\end{equation}
where $f \in L^2(\Omega)$ is a given source term. We use standard notations for norms and scalar products in Sobolev spaces (cf.~\cite{adams2003sobolev}): for $u \in H^1_0 (\Omega)$ we write $ |u|_1 := (\int_{\Omega} | \nabla u|^2)^{1/2}$ and denote by $(\cdot, \cdot)$ the $L^2(\Omega)$- scalar product with corresponding norm $\| \cdot \|$. The weak form of \eqref{modelproblem} is to find $u \in H^1_{0}(\Omega)$ s.t. 
\begin{equation} \label{weakform}
(\nabla u, \nabla v)=(f, v) \,\,\forall v \in H^1_{0}(\Omega).
\end{equation}

We consider a shape regular family of triangulations $\{ \mathcal{T}_h \}_{h}$ of $\Omega$ in the sense of Ciarlet \cite{ciarlet2002finite}, depending on a parameter $h>0$ with shape regularity parameter $C_{\mathcal{T}_h}$ consisting of cells $T$ that are convex quadrilaterals in two dimensions, and convex hexahedrons in three dimensions. 

The set of all edges/faces $E$ of the cells is denoted by $\mathcal{E}_h$ and similarly, $\mathcal{E}_{h,int}:= \mathcal{E}_{h} \setminus \partial\Omega$ is the set of internal edges/faces. We use the Courant finite element space 
$V_h:= \{\varphi \in C^0(\Omega) \text{ s.t. } \varphi|_T \in P^k(T) \quad \forall T \in \mathcal T_h, \varphi = 0 \text{ on }  \partial \Omega\}\subset H^1_0(\Omega).$ The Galerkin solution $u_h \in V_h$ is obtained by solving the discrete system
\begin{equation} \label{discretesystem}
(\nabla u_h, \nabla v_h)=(f, v_h) \,\,\forall v_h \in V_h.
\end{equation}
In exact arithmetic, the discretization error $e_h:= u- u_h$ satisfies the standard orthogonality condition
\begin{equation} \label{galerkinorthogonality}
(\nabla (u - u_h), \nabla v_h)= 0\,\,\forall v_h \in V_h. 
\end{equation}

Let $N=  \dim (V_h),$ the discrete system \eqref{discretesystem} leads to a linear algebraic system of type 
\begin{equation}\label{linearsys}
    A \mathbf{u}= \mathbf{f}\,\,\textnormal{in}\,\,\mathbb{R}^N,
\end{equation}
where $A$ denotes the symmetric positive definite (SPD) stiffness matrix with entries $a_{ij}:=(\nabla \varphi_j, \nabla \varphi_i)$ $\forall \,i,j=1,.., N,$ $\mathbf{u}=[ u_1, \dots, u_N]^T$ denotes the coefficients vector in $\mathbb{R}^N$ of the discrete approximation $u_h = \sum_{j=1}^{N} u_j \varphi_j \in V_h$ and 
$\mathbf{f}= [ f_1, \dots, f_N]^T$ is the vector with entries  $f_j = (f, \varphi_j) \,\, \forall j = 1, .., N.$

In particular, we assume that an initial (coarse) triangulation $\mathcal{T}_1$ is given and we consider a (a priori unknown) nested sequence of shape regular triangulations $\mathcal{T}_k$, for $k= 1, \dots, \bar{k}$, which induces a nested sequence of finite element spaces $V_1 \subset V_2 \subset \dots \subset V_{\bar{k}}$. We let $N_k:=  \dim (V_k),$ for $k=1, \dots, \bar{k}$. By construction, the inequalities $N_1 < N_2 < \dots < N_{\bar{k}}$ hold true.
The associated discrete systems for each level  $k=1, 2, \dots, \bar{k}$ read
\begin{equation} \label{discretesystemk}
(\nabla u_k, \nabla v_k)=(f, v_k) \,\,\,\forall \, v_k\in V_k,
\end{equation}
and they generate linear systems of type 
\begin{equation} \label{linearsystemk}
A_k\mathbf{u}_k= \mathbf{f}_k
\end{equation}
of respective dimensions $N_k$.

The sequence of meshes in the classical AFEM algorithm is constructed through the \Solve-\Estimate-\Mark-\Refine steps, where the \Solve step should compute an algebraically exact solution of~\eqref{linearsystemk}. In the \Estimate step, standard residual-based a posteriori error estimators are the most widely used. They were first introduced in the context of FEM by Babu\v{s}ka and Rheinboldt in \cite{babuvska1978posteriori} and they have been thereafter widely studied in the literature (for a review, see~\cite{verfurth1996review, ainsworth2011posteriori}).

Their derivation is based on the residual functional associated to the Galerkin solution, which is defined as $\mathcal{R}\{u_h\} : H^1_0(\Omega) \longrightarrow \mathbb{R}$, $\mathcal{R}\{u_h\}:=  (f, \bullet ) - a(u_h, \bullet)$ with corresponding dual norm
\begin{equation}
\label{dualnormoftheresidual}
\|\mathcal{R}\{u_h\}\|_{\star} := \sup_{v \in H^1_0(\Omega) \setminus \{0\}} \frac{\mathcal{R}\{u_h\}(v)}{ |v|_1 } = \sup_{v \in H^1_0(\Omega) \setminus \{0\}} \frac{   (f,v)  - a(u_h, v)}{ |v|_1 } .
\end{equation}
The identity $|e_h|_1 = \| \mathcal{R}\{u_h\} \|_{\star}$ leads to reliable and efficient residual-based a posteriori bounds for the discretization error via estimation of the residual function, i.e., 
\begin{equation} \label{rel}
\|e_h\| \le C_{\text{rel}} \eta(u_h) + h.o.t._{\text{rel}}
\end{equation}
and 
\begin{equation} \label{eff}
\eta(u_h) \le C_{\text{eff}} \|e_h\|  + h.o.t._{\text{eff}},
\end{equation}
where the multiplicative constants  $C_{\text{rel}}$ and $C_{\text{eff}}$ are independent on the mesh size and h.o.t. denote oscillations of the right-hand side $f$, which are generally negligible w.r.t. $\|e_h\|$.

We use standard residual-based a posteriori error estimators which are locally defined through the jump of the gradient of the discrete approximation across the edges/faces $E$ of the cells, i.e., for a given function $v_h \in V_h,$ define  for $E \in \mathcal{E}_h$ and $T \in \mathcal{T}_h$
\begin{equation}
\begin{aligned}
\label{edge}
J_E(v_h)&:=h^{1/2}_E \left \| \left[ \frac{\partial v_h}{\partial n_E} \right] \right \|_E,\,\, J_T(v_h):= \sum_{E \in \partial T} J_E(v_h),\\
J(v_h)&:= \left( \sum_{E \in \mathcal{E}_h} J_E(v_h)^2 \right)^{1/2} = \left( \frac{1}{2} \sum_{T \in \mathcal{T}_h} J_T(v_h)^2 \right)^{1/2},
\end{aligned}
\end{equation}
where $\left[ \bullet \right]$ indicates the jump of a piecewise continuous function across the edge/face $E$ in normal direction $n_E$.
A classical upper bound on the discretization error using $J$ is given by the estimate
\begin{equation}
\label{upperboundestimate}
|u-u_h|_1 \le C^{\star} (\text{osc}^2 + J^2(u_h))^{1/2},
\end{equation}
where the constant $C^{\star}>0$ depends on the shape of the triangulation, on $\Omega$ and on $\Gamma$, but it is independent of $f$ and of the mesh-sizes $h_T$, and $\text{osc}$ is an oscillatory term (see~\cite{carstensen1999quasi} for the exact definition of $\text{osc}$ and for a proof of \eqref{upperboundestimate}).

In S-AFEM, we do not solve the linear systems \eqref{linearsystemk} for $k=2, \ldots, \bar k-1$, but only apply a smoother (typically, $\ell$ steps of a smoothing iteration) by taking as an initial guess the prolongation of the approximation from the previous level, obtaining an algebraically inexact approximation $u_h^\ell$ of $u_h$ in all intermediate steps. In this case, the total error in intermediate steps can be written as the sum of two contributions
\begin{equation} \label{totalerror}
\underbrace{u-u^\ell_h}_{\textnormal{total error}} = \underbrace{(u-u_h)}_{\textnormal{discretization error}} + \underbrace{(u_h-u^\ell_h)}_{\textnormal{algebraic error}}.
\end{equation}

A vast literature is dedicated to the extension of the standard residual-based a posteriori error estimator theory to incorporate in some way the algebraic error deriving from an inexact \Solve step. We refer to the seminal and investigative paper by Pape$\check{z}$ and Strako$\check{s}$ \cite{papevz2017residual} and the references therein for various approaches. 

In particular, in \cite{papevz2017residual} the authors give a detailed proof of a (worst case scenario) residual-based upper bound on the energy norm of the total error
\begin{equation} \label{bounddipapez}
    | u-v_h|^2_1 \le 2 C^2 ( J^2(v_h) + \text{osc}^2) + 2 C^2_{\text{intp}} |u_h-v_h|_1^2,
\end{equation}
for arbitrary $v_h \in V_h$. In order to provide sharper bounds, one should exploit the fact that $v_h$ is actually not arbitrary, but it originates from the \Smooth step of S-AFEM, i.e., it is the result of a smoothing iteration.

For simplicity of exposition, in this section we use a fixed number of Richardson iterations as a smoother, but other choices are possible (see, for example, the reviews in \cite{bramble2018multigrid, griebel1995abstract, xu1992iterative}). 

Given $\omega_k \in \mathbb{R}$ a fixed parameter and $\mathbf{u}_k^{(0)} \in \mathbb{R}^{N_k}$ an initial guess, a Richardson smoothing iteration for~\eqref{linearsystemk} takes the form
\begin{equation}\label{eq:Richardson_it}
\mathbf{u}_k^{(i+1)} = \mathbf{u}_k^{(i)} + \omega_k (\mathbf{f}_k - A_k\mathbf{u}_k^{(i)}) \,\,\, \textnormal{for} \,\,i=0,1, \dots, \ell
\end{equation}

After $i+1$ iterations, the error $\mathbf{e}_k^{(i+1)}:=\mathbf{u}_k - \mathbf{u}_{k}^{(i+1)}$ satisfies the error propagation formula $\mathbf{e}_k^{(i+1)} = M_k \mathbf{e}_k^{(i)}= \dots = M_k^{i+1} \mathbf{e}_k^{(0)}$, where $M_k:=Id_{N_k} - \omega_k A_k$ is the Richardson iteration matrix. When using Richardson method as an iterative solver, convergence takes place for $0<\omega_k< 2/\rho(A_k),$ where $\rho(A_k)$ is the spectral radius of $A_k$.
The optimal choice of the parameter $\omega_k$ is in this case $\omega_k = 1/\rho(A_k)$ (see, e.g.,~\cite{hackbusch1994iterative, saad2003iterative}).

The high frequency components of the error are reduced by a factor which is close to zero, while the low frequency components of the error are left substantially unaltered. The high frequencies are also the ones that have a greater influence on classical a-posteriori error estimators, justifying the S-AFEM algorithm. 

To show the effect of the smoothing iterations on the a-posteriori error estimator, we consider as an example case the Peak Problem in two dimensions as described in Subsection~\ref{subsec:2d_examples}, and we apply ten cycles of the classical AFEM algorithm using non-preconditioned Richardson iterations for the algebraic resolution of the system with initial guess given by the prolongation of the previous approximation for each cycle.

In Figure~\ref{Richardson-iteration-counts}
\begin{figure}
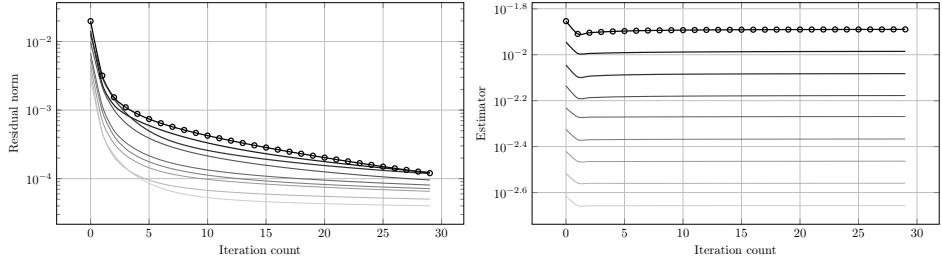
\resizebox{.98\textwidth}{!}{
  \AllResidualsVSestimators{Peak2DFullRichardson}{9}}
  \caption{Algebraic residual $\ell^2$-norm (left) and error estimator (right) for intermediate cycles of the classical AFEM algorithm when using Richardson iteration without preconditioner as a solver, with prolongation from the previous solution as starting guess. Darker lines correspond to earlier cycles. Only the first 30 iterations are shown.}
  \label{Richardson-iteration-counts}
\end{figure} we plot the $\ell^2$-norm of the residual $\mathbf{r}^{(\ell)}_k:= A_k \mathbf{e}_k^{(\ell)}$ and the value of the a-posteriori error estimator $J(u^{\ell}_k)$ for all cycles as the Richardson iteration count $\ell$ increases from $1$ to $30$.

The same behaviour is present in every refinement cycle: the first few Richardson iterations induce a rapid drop in the residual norm (due to convergence of the highly oscillatory terms in the solution), while the second part of the iterations converge very slowly, corresponding to the convergence speed of the low frequencies in the solution (typical of Richardson iteration). The estimator, on the other hand, stagnates after very few Richardson iterations (around two or three), suggesting that $J(u_h^{\ell})$ is almost the same as $J(u_h)$ for $\ell \ge 3$, i.e., the error estimator is mainly affected by the highly oscillatory components of the discrete algebraic solution $u_h^{\ell}$.

In this respect, classical results of a-posteriori error analysis do not provide sharp bounds for the estimator evaluated on the smoothed algebraic approximation $J(u_h^\ell)$, and this closeness remains an open problem.

One can combine the upper bound~\eqref{upperboundestimate} with a global lower bound~\cite{becker2009convergence} of the estimator evaluated on a generic $v_h$, i.e.,
\begin{equation}
\label{lowerboundestimate}
J^2(v_h) \le  C_{\star} ( |u-v_h|^2_1 + \text{osc}^2 )\,\, \forall v_h \in V_h,
\end{equation}
to relate $J(u_h^\ell)$ with $J(u_h)$. However, the result (proved in the following Lemma) remains a worst case estimate (similar to~\eqref{bounddipapez}), and fails to capture the behaviour that we observe, for example in Figure~\ref{fig:corner-2d-intro}, where $J(u_h)$ and $J(u_h^\ell)$ are substantially equivalent.

\begin{lemma}\label{theoremmain1}There exist positive constants $C_1, C_2, C_3$ that only depend on the minimum angle of the triangulation, on $\Omega$, and on $\Gamma$, that are independent of $f, u, u_h$, and of the mesh-sizes $h_T$ such that 
\begin{equation} \label{equationmain2}
J^2(v_h) \le C_1 J^2(u_h) + C_2 |u_h-v_h|^2_1 + C_3 \text{osc}^2 \,\,\forall v_h \in V_h.
\end{equation}
\end{lemma}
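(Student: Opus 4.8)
The plan is to obtain \eqref{equationmain2} by chaining together the two estimates already available in the excerpt, namely the efficiency (lower) bound \eqref{lowerboundestimate} and the reliability (upper) bound \eqref{upperboundestimate}, bridged by the triangle inequality applied to the exact Galerkin solution $u_h$. The underlying idea is that $J(v_h)$ controls the total error $|u-v_h|_1$ up to oscillation, that this total error splits into a part measuring the distance to $u_h$ and a genuine discretization part, and that the discretization part is in turn controlled by $J(u_h)$.

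First I would apply the lower bound \eqref{lowerboundestimate} to the arbitrary $v_h \in V_h$, giving
\begin{equation*}
J^2(v_h) \le C_\star \left( |u-v_h|^2_1 + \text{osc}^2 \right).
\end{equation*}
Next I would insert $u_h$ and use the triangle inequality together with the elementary bound $(a+b)^2 \le 2a^2 + 2b^2$, obtaining $|u-v_h|^2_1 \le 2|u-u_h|^2_1 + 2|u_h - v_h|^2_1$. Finally I would control the discretization term by squaring the upper bound \eqref{upperboundestimate}, which yields $|u-u_h|^2_1 \le (C^\star)^2 (\text{osc}^2 + J^2(u_h))$. Substituting these back and collecting terms produces \eqref{equationmain2} with the explicit choices $C_1 = 2 C_\star (C^\star)^2$, $C_2 = 2 C_\star$, and $C_3 = C_\star \bigl( 2 (C^\star)^2 + 1 \bigr)$. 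Since $C^\star$ and $C_\star$ depend only on the shape regularity (minimum angle) of the triangulation, on $\Omega$, and on $\Gamma$, and are independent of $f$, $u$, $u_h$, and the mesh-sizes, the same is true of $C_1, C_2, C_3$, as required.

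There is no genuine analytical obstacle here: the statement is a direct algebraic consequence of reliability \eqref{upperboundestimate} and efficiency \eqref{lowerboundestimate}, and the only care needed is in tracking the constants and in the benign use of Young's inequality. The real limitation — which the proof cannot overcome, and which I would flag rather than attempt to remove — is qualitative: the factor $C_1 = 2 C_\star (C^\star)^2$ and the additive term $C_2 |u_h - v_h|^2_1$ make \eqref{equationmain2} a worst-case estimate of the same flavour as \eqref{bounddipapez}. In particular, when $v_h = u_h^\ell$ originates from the \Smooth step, the bound does not exploit the frequency content of the algebraic error $u_h - u_h^\ell$, and therefore cannot explain the observed near-equality of $J(u_h)$ and $J(u_h^\ell)$; sharpening it would require a spectral argument describing how the smoother concentrates the remaining algebraic error in the low-frequency range to which $J$ is insensitive.
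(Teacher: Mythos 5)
Your proof is correct and follows the same chain as the paper: apply the efficiency bound \eqref{lowerboundestimate} to $v_h$, split the total error through the Galerkin solution $u_h$, and absorb the discretization part via the reliability bound \eqref{upperboundestimate}. The only difference is in the splitting step: where you use the triangle inequality together with Young's inequality, $|u-v_h|^2_1 \le 2|u-u_h|^2_1 + 2|u_h-v_h|^2_1$, the paper instead invokes the exact Pythagorean identity $|u-v_h|^2_1 = |u-u_h|^2_1 + |u_h-v_h|^2_1$, which holds because $u_h - v_h \in V_h$ and Galerkin orthogonality \eqref{galerkinorthogonality} makes the cross term vanish. As a result the paper's constants ($C_1 = C_\star {C^\star}^2$, $C_2 = C_\star$, $C_3 = C_\star({C^\star}^2+1)$) are smaller than yours by a factor of two; since the lemma only asserts the existence of suitable constants, both arguments prove the statement, but the orthogonality identity is the natural tool here and costs nothing extra. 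Your closing remark about the bound being a worst-case estimate that cannot explain the observed closeness of $J(u_h^\ell)$ and $J(u_h)$ accurately reflects the discussion the paper itself gives after the lemma.
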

\begin{proof}
For a given function $v_h \in V_h,$ we decompose $u-v_h=(u-u_h) + (u_h-v_h)$ and we apply the equality $|u-v_h|^2_1 = |u-u_h|^2_1 + |u_h-v_h|^2_1$ (see, e.g. \cite{liesen2013krylov}) to the lower bound \eqref{lowerboundestimate}
\begin{equation}
\begin{aligned}
J^2(v_h) & \le  C_{\star} ( |u-v_h|^2_1 + \text{osc}^2 )\\
          & = C_{\star} ( |u-u_h|^2_1 + |u_h-v_h|^2_1 + \text{osc}^2 ) \\
          & \le C_{\star} ( {C^{\star}}^2 (\text{osc}^2 + J^2(u_h)) + |u_h-v_h|^2_1 + \text{osc}^2 ) \label{ketu}\\
          &= C_{\star} {C^{\star}}^2 J^2(u_h) + C_{\star}|u_h-v_h|^2_1 +  C_{\star}({C^{\star}}^2 +1) \text{osc}^2 \\
          &= C_1 J^2(u_h) + C_2 |u_h-v_h|^2_1 + C_3 \text{osc}^2,
\end{aligned}
\end{equation}
where we have used the upper bound \eqref{upperboundestimate} in \eqref{ketu}.
\end{proof}

If we apply Lemma \ref{theoremmain1} with $v_h = u^\ell_h$, we obtain an upper bound on $J^2(u^\ell_h)$ in terms of $J^2(u_h)$ and of the algebraic error. A similar result involving the full estimator $\eta(u^\ell_h)$ can be found in~\cite{arioli2013stopping}. 

It is still unclear how to improve the upper bound~\eqref{equationmain2} to explain why $J(u_h^\ell)$ and $J(u_h)$ are as close as the numerical evidence suggests. What remains to be proved is that a sharper estimate on the constant $C_2$ of~\eqref{equationmain2} may be obtained, that depends on the frequency content of $v_h$, showing that $C_2$ is small when $v_h = u_h^\ell$ stems from a smoothing iteration, bringing $J^2(u_h)$ close to $J^2(u_h^\ell)$ even though $|u_h^\ell-u_h|^2_1$ is in fact not small at all.

What can be done, however, is an estimate of the evolution of $|u_h^\ell-u_h|_1$ in the intermediate steps of S-AFEM, exploiting classical results of multigrid analysis. Next theorem provides such result when the smoothing iteration is performed using Richardson method.

\begin{theorem}[Algebraic Error propagation in S-AFEM] \label{teo1errore}
Let $\mathbf{e}^{(\ell)}_k := \mathbf{u}_{k}-\mathbf{u}^\ell_{k}$ denote the algebraic error  after $\ell$ smoothing iterations at step $k$ of S-AFEM for $k=2, \dots, \bar{k}-1$. Let $\mathbf{a}_{1} = 0$ and define
\begin{equation}\label{ai}
    \mathbf{a}_{k+1}:= \mathbf{u}_{k+1} - I_k^{k+1}\mathbf{u}_k \in \mathbb{R}^{N_{k+1}},\qquad k=1,\dots, \bar{k}-1
\end{equation}
denote the difference between the  exact algebraic solution $\mathbf{u}_{k+1}$ at level $k+1$ and the prolongation to level $k+1$ of the exact algebraic solution $\mathbf{u}_k$ at level $k$. Then, the following error propagation formula holds true\\
\begin{align}
    &\mathbf{e}^{(\ell)}_{k+1}= {M_{k+1}}^{\ell} (\mathbf{a}_{k+1} +   I_k^{k+1} \mathbf{e}_k^{(\ell)}),\,\,\text{ for }\,\,k=1, \dots,\bar{k}-1.
\end{align}

\end{theorem}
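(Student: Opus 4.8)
The plan is to combine the Richardson error propagation formula, stated earlier in the excerpt, with the specific choice of initial guess prescribed by the S-AFEM \Smooth step, and then reduce everything to a single algebraic identity obtained by adding and subtracting the prolongation of the exact coarse solution. The result is an exact bookkeeping formula, so no inequalities or convergence estimates are needed; the whole argument is driven by linearity of the prolongation matrix.

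First I would apply the error propagation formula for Richardson iteration at level $k+1$. Iterating $\mathbf{e}^{(i+1)}_{k+1} = M_{k+1}\mathbf{e}^{(i)}_{k+1}$ a total of $\ell$ times gives
\[
  \mathbf{e}^{(\ell)}_{k+1} = M_{k+1}^{\ell}\,\mathbf{e}^{(0)}_{k+1},
\]
so that the problem is transferred to identifying the initial algebraic error $\mathbf{e}^{(0)}_{k+1} = \mathbf{u}_{k+1} - \mathbf{u}^{(0)}_{k+1}$. By the definition of the \Smooth step of Algorithm~\ref{smoothedalgorithm}, the initial guess at level $k+1$ is the prolongation of the smoothed approximation from the previous level, namely $\mathbf{u}^{(0)}_{k+1} = I_k^{k+1}\mathbf{u}^{(\ell)}_k$, whence $\mathbf{e}^{(0)}_{k+1} = \mathbf{u}_{k+1} - I_k^{k+1}\mathbf{u}^{(\ell)}_k$.

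The second and central step is the algebraic manipulation of this initial error. I would write $\mathbf{u}^{(\ell)}_k = \mathbf{u}_k - \mathbf{e}^{(\ell)}_k$ from the definition of the algebraic error at level $k$, substitute it into the expression for $\mathbf{e}^{(0)}_{k+1}$, and invoke the linearity of the prolongation matrix $I_k^{k+1}$ to split the prolongation term, obtaining
\[
  \mathbf{e}^{(0)}_{k+1} = \mathbf{u}_{k+1} - I_k^{k+1}\mathbf{u}_k + I_k^{k+1}\mathbf{e}^{(\ell)}_k.
\]
Recognising $\mathbf{u}_{k+1} - I_k^{k+1}\mathbf{u}_k = \mathbf{a}_{k+1}$ from the definition~\eqref{ai} then yields $\mathbf{e}^{(0)}_{k+1} = \mathbf{a}_{k+1} + I_k^{k+1}\mathbf{e}^{(\ell)}_k$, and inserting this back into the propagated formula completes the proof. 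Note that the statement holds uniformly for all $k=1,\dots,\bar k-1$, including $k=1$, where the convention $\mathbf{a}_1=0$ together with $\mathbf{u}^{(\ell)}_1=\mathbf{u}_1$ (so $\mathbf{e}^{(\ell)}_1=0$) is consistent with the formula; no induction is required, since the identity relates level $k+1$ to level $k$ directly.

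I do not anticipate a genuine obstacle: the only point deserving mild care is keeping the two independent error contributions conceptually distinct — the inter-level transfer error $\mathbf{a}_{k+1}$ between the exact algebraic solutions on consecutive grids, and the propagated algebraic error $I_k^{k+1}\mathbf{e}^{(\ell)}_k$ inherited from the coarser smoothing — and ensuring that the linearity of $I_k^{k+1}$ is applied correctly when the prolongation of $\mathbf{u}^{(\ell)}_k$ is separated into these two pieces.
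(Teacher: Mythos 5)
Your proof is correct and follows essentially the same route as the paper's: you identify the initial guess $\mathbf{u}^{(0)}_{k+1} = I_k^{k+1}\mathbf{u}^{(\ell)}_k$ from the \Smooth step, use linearity of $I_k^{k+1}$ to rewrite the initial error as $\mathbf{a}_{k+1} + I_k^{k+1}\mathbf{e}^{(\ell)}_k$, and apply the Richardson error propagation formula $\mathbf{e}^{(\ell)}_{k+1} = M_{k+1}^{\ell}\mathbf{e}^{(0)}_{k+1}$ --- exactly the paper's argument, merely with the two steps carried out in the opposite order.
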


\begin{proof}
In the \Prolongate step of S-AFEM, the outcome $\mathbf{u}_k^{(\ell)}$ of the \Smooth procedure at step $k$ is prolongated to step $k+1$, and used as an initial guess in Richardson iteration at step $k+1$. We can write
\begin{equation}
\begin{aligned}
\mathbf{u}_{k+1}^{(0)} &= I_k^{k+1}\mathbf{u}_k^{(\ell)} \\
                        &= I_k^{k+1} \mathbf{u}_k - I_k^{k+1} \mathbf{e}_k^{(\ell)},  
\end{aligned}
\end{equation}
and therefore express the initial error at step $k+1$ as
\begin{equation}
\begin{aligned}
 \mathbf{e}_{k+1}^{(0)}&= \mathbf{u}_{k+1} - \mathbf{u}_{k+1}^{(0)}\\
 &= \mathbf{u}_{k+1} - I_{k}^{k+1} \mathbf{u}_{k} + I_{k}^{k+1} \mathbf{e}_{k}^{(\ell)}\\
 &= \mathbf{a}_{k+1} + I_{k}^{k+1} \mathbf{e}_{k}^{(\ell)}. 
\end{aligned}
\end{equation}

By applying directly the property of the error propagation formula for Richardson iterations, we obtain the final algebraic error at level $k+1$:
\begin{equation}
\begin{aligned}
\mathbf{e}_{k+1}^{(\ell)} &= {M_{k+1}}^{\ell}    \mathbf{e}_{k+1}^{(0)}  \\
                       &= {M_{k+1}}^{\ell} ( \mathbf{a}_{k+1} + I_k^{k+1} \label{kjo} \mathbf{e}_k^{(\ell)}),
\end{aligned}
\end{equation}
which proves the recursive formula.
\end{proof}
 
Theorem~\ref{teo1errore} shows that the nature of the algebraic error in S-AFEM is the result of $\ell$ smoothing iterations applied to a vector that accumulates the (smoothed) prolongation of the exact algebraic solution coming from step zero. 

The rationale behind S-AFEM is then that in the first step we perform a full \Solve, resulting in a negligible algebraic error, and the only components of the error that we are introducing when prolongating from step $k$ to step $k+1$ are high frequency errors (introduced by local refinement). These, however, are reduced very quickly by $\ell$ steps of smoothing iterations.

The residual algebraic error that persists as S-AFEM proceeds (clearly visible in Figure~\ref{fig:corner-2d-intro}) seems not to have a detrimental effect on $J(u_h^\ell)$. Such an algebraic error is probably confined on medium to low frequencies, and shows no noticeable effect on the \Estimate and \Mark steps of S-AFEM.

Although the value we plot in Figure~\ref{Richardson-iteration-counts} for the estimator is a global one, and gives no information on the distribution of the local estimator on the grid, it is a good hint that the overall behaviour of such distribution will not be changing too much after the first few Richardson iterations. We show some numerical evidence  that this is actually the case in the numerical validation provided in Section \ref{numericalvalidation}.

To summarize the idea behind S-AFEM, we argue that in the intermediate AFEM cycles it is not necessary to solve exactly the discrete system. What matters instead is to capture accurately the highly oscillatory components of the discrete approximation.  Low frequency components \textit{may} have an influence on the error estimator, however, this is mostly a \textit{global} influence, that has a small effect on the cells that will actually be marked for refinement in the \Mark step. As an example, consider Figure~\ref{fig:peak-3d-error}, where a Peak problem in 3D is solved using both AFEM and S-AFEM. The estimator evaluated on $u_h^\ell$ in this case is farther away from the one evaluated on $u_h$ w.r.t. the same problem in two dimensions, but the convergence rate of the solution obtained with the sequence of grids constructed with S-AFEM is still the optimal one, and comparable to the one obtained with AFEM at a fraction of the computational cost.

\section{Numerical validation}
\label{numericalvalidation}
We consider a class of drift-diffusion problems of the following form
\begin{equation}\label{eq:diffusionetrasportogenerale}
  \begin{aligned}
    & -\Delta u + \boldsymbol{\beta} \cdot \nabla u = f &&\textnormal{in}\,\,\Omega\\
    & u = u_g &&\textnormal{on}\,\,\partial\Omega,
  \end{aligned}  
\end{equation}
in two and three dimensions. 

We compare the classical AFEM algorithm with S-AFEM based on three different smoothing strategies: Richardson iterations (Richardson), conjugate gradient iterations (CG), and generalized minimal residual iterations (GMRES) in the symmetric case ($\boldsymbol{\beta}=0$), and GMRES alone in the non-symmetric case ($\boldsymbol{\beta}\neq0$), and for different discretization degrees.

We test two classical experiments used to benchmark adaptive finite element methods when $\boldsymbol{\beta}$ is zero, and a simple drift-diffusion problem with constant transport term $\boldsymbol{\beta}$ to test S-AFEM in general drift-diffusion problems.

The numerical results presented in this paper were realized using a custom \texttt{C++} code based on the \dealii library~\cite{dealii9.0,ArndtBangerthClevenger-2019-a,Daniel-Arndt-2020-a}, and  on the \texttt{deal2lkit} library~\cite{sartori18}. In the \Mark step, we use the classical D\"orfler marking strategy~\cite{dorfler1996convergent}: for any level $k$ we mark for refinement the subset of elements
\begin{equation}
    \mathcal{M}_k:=  \{T \in \mathcal{T}_k : \eta_T \ge L \},
\end{equation}
where $L$ is a  \textit{threshold error},  defined as the largest value such that
\begin{equation}
    \Theta \sum_{T \in \mathcal{T}_k} \eta^2_{T} \le \sum_{T \in \mathcal{M}_k} \eta^2_{T}.
\end{equation}
The parameter $\Theta$ is such that $0 \le \Theta \le 1$, where $\Theta= 1$ corresponds to an almost uniform refinement, while $\Theta=0$ corresponds to no refinement. In our numerical tests, unless otherwise stated, we set $\Theta=0.3$. The refinement strategy that we adopt in this work is based on the use of hanging nodes~\cite{dealii2007}.

The results presented in this section are a subset of the full campaign of simulations presented in \cite{mulita2019}.

\subsection{Two-dimensional examples: pure diffusion, bi-linear case, Richardson smoother}
~\label{subsec:2d_examples}
\paragraph{Smooth domain, peak right hand side}
The first example we consider consists in solving the model problem with no transport term ($\boldsymbol{\beta}=0$) on a square domain, with a custom forcing term that contains a peak in a specified point in the domain, forcing the exact solution to be
\begin{equation}
  \label{eq:peak-solution-2d}
  u(x,y) = x(x-1)y(y-1)e^{-100\big( (x-0.5)^2+(y-0.117)^2\big)},
\end{equation} as shown in Figure~\ref{fig:peak-2d-solution}.
\begin{figure}
  \centering
  \includegraphics[width=.80\linewidth]{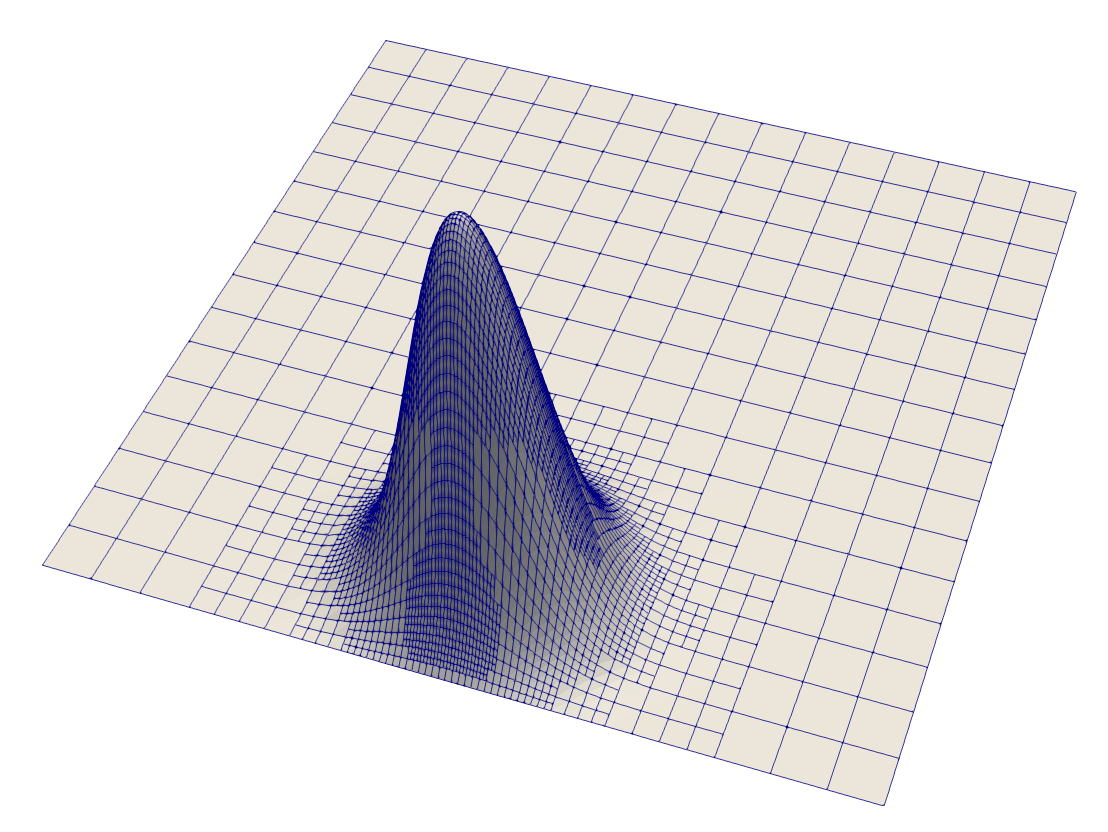}
  \caption{Solution to the Peak Problem in 2D~\eqref{eq:peak-solution-2d}.}
  \label{fig:peak-2d-solution}
\end{figure}

\paragraph{L-shaped domain, smooth right hand side}
In the second two-dimensional test case in pure diffusion, we consider a L-shaped domain, i.e., a square where the upper right corner is removed, and the reentrant corner coincides with the origin. No forcing term is added to the problem, but the boundary conditions are set so that the following exact solution is obtained (when expressed in polar coordinates)
\begin{equation}
  \label{eq:fichera-corner-2d}
  u(r,\theta) = r^{2/3} \sin\left(\frac{2\theta+5\pi}{3}\right),
\end{equation}
as shown in Figure~\ref{fig:corner-2d-solution}.
\begin{figure}
  \centering
  \includegraphics[width=.80\linewidth]{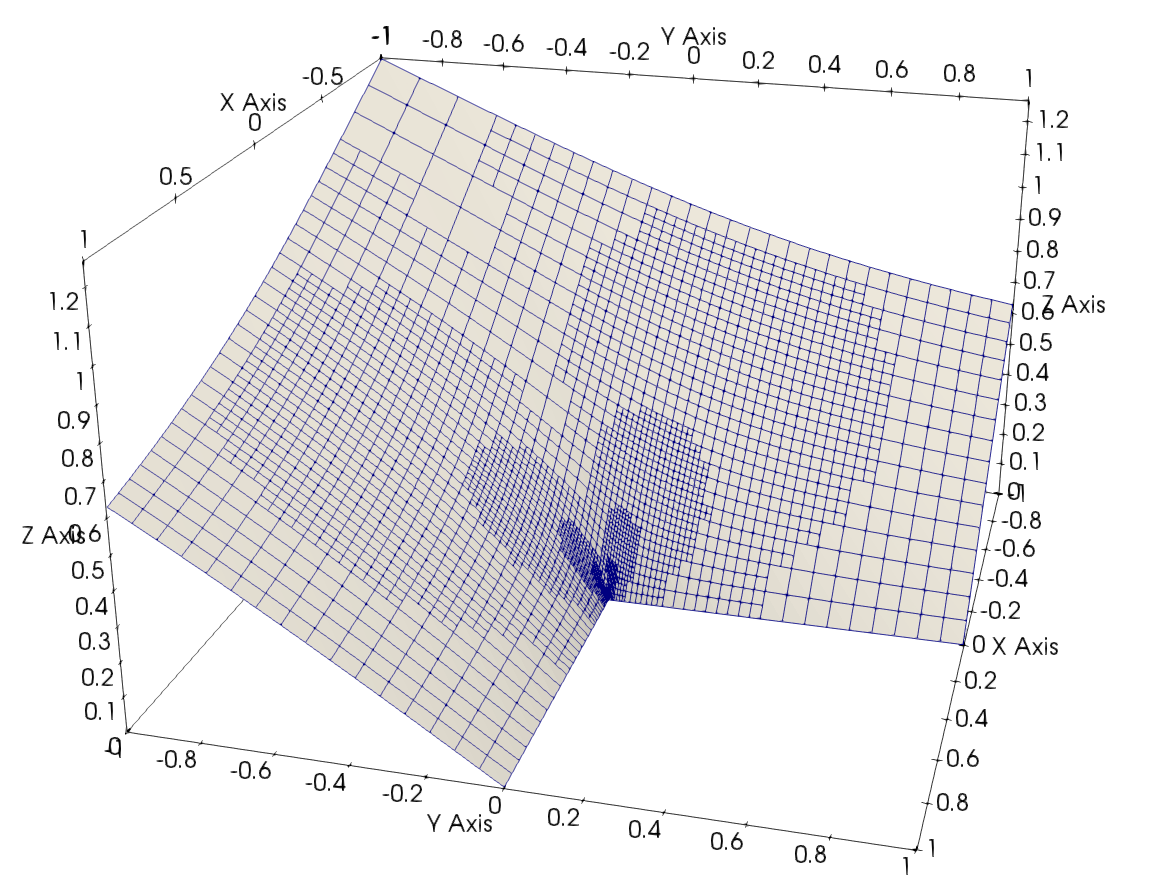}
  \caption{Solution to the L-shaped domain Problem in 2D~\eqref{eq:fichera-corner-2d}.} 
  \label{fig:corner-2d-solution}
\end{figure}

In both cases, we apply ten cycles of classical AFEM and of S-AFEM, respectively. For the AFEM algorithm, we use the CG method as iterative solver, with an algebraic multigrid preconditioner (AMG), and we iterate until the $\ell^2$-norm of the residual is below a tolerance of $10^{-12}$ for each cycle. For S-AFEM, we modify the intermediate cycles and we only apply three Richardson iterations. For reference, we report a comparison between the cells marked for refinement by AFEM and S-AFEM after four cycles for the Peak Problem in Figure \ref{fig:comparisonmarkpeak2d} and after nine cycles for the L-shaped domain Problem in Figure \ref{fig:comparisonmarkcorner_2d}. 

In both cases, the set of marked cells, although different in some areas, produces a refined grid that is very similar between the classical AFEM and the S-AFEM, and where the accuracy of the final solution is comparable. 

\begin{figure} 
  \centering
  \includegraphics[width=.49\linewidth]{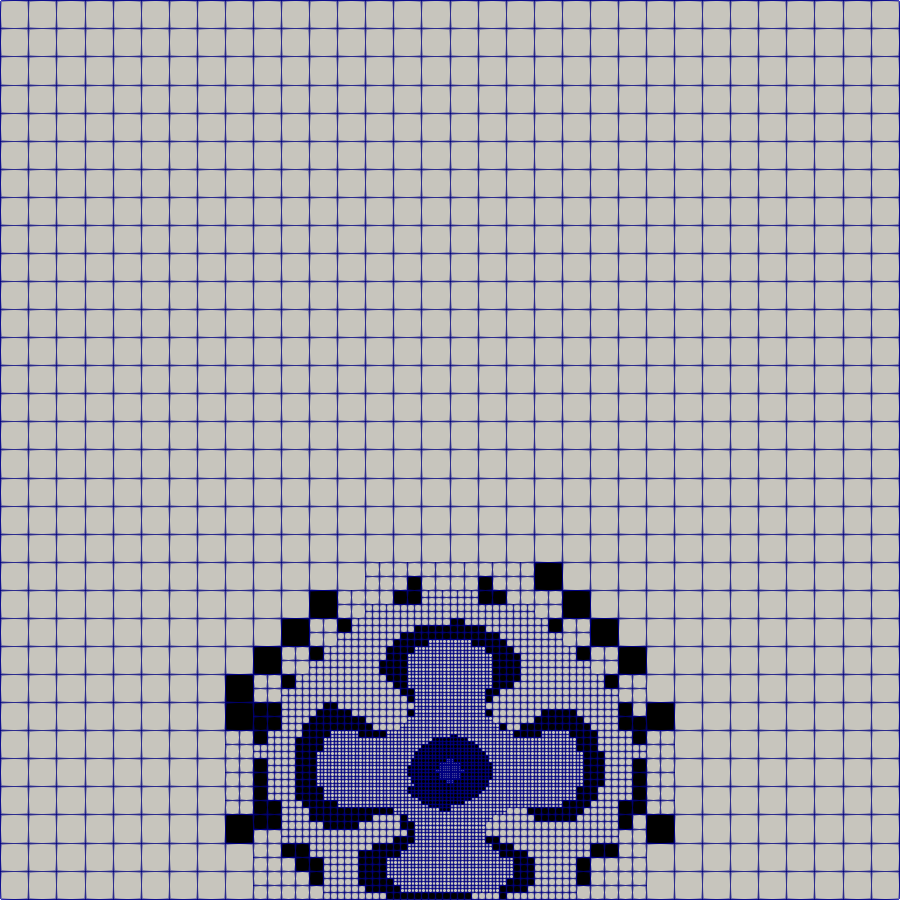}
  \hfill
  \includegraphics[width=.49\linewidth]{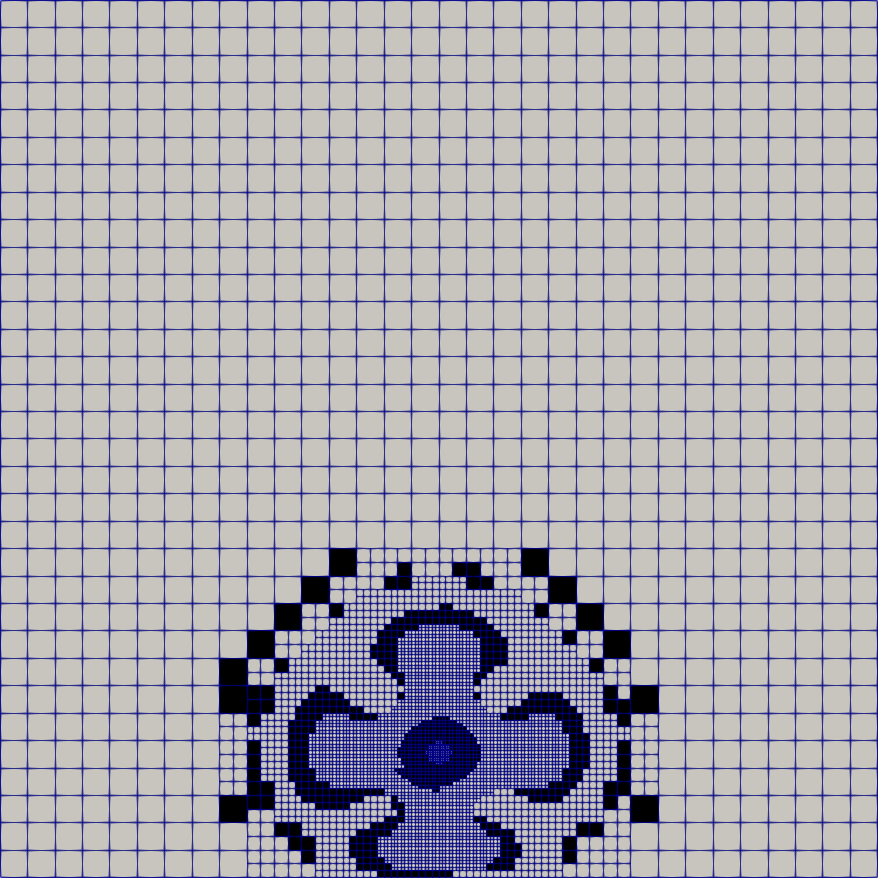}
  \caption{Comparison between the cells marked for refinement in AFEM (left) and S-AFEM (right) after $9$ cycles for the Peak Problem in 2D.}
  \label{fig:comparisonmarkpeak2d}
\end{figure}

\begin{figure}
  \centering
  \includegraphics[width=.49\textwidth]{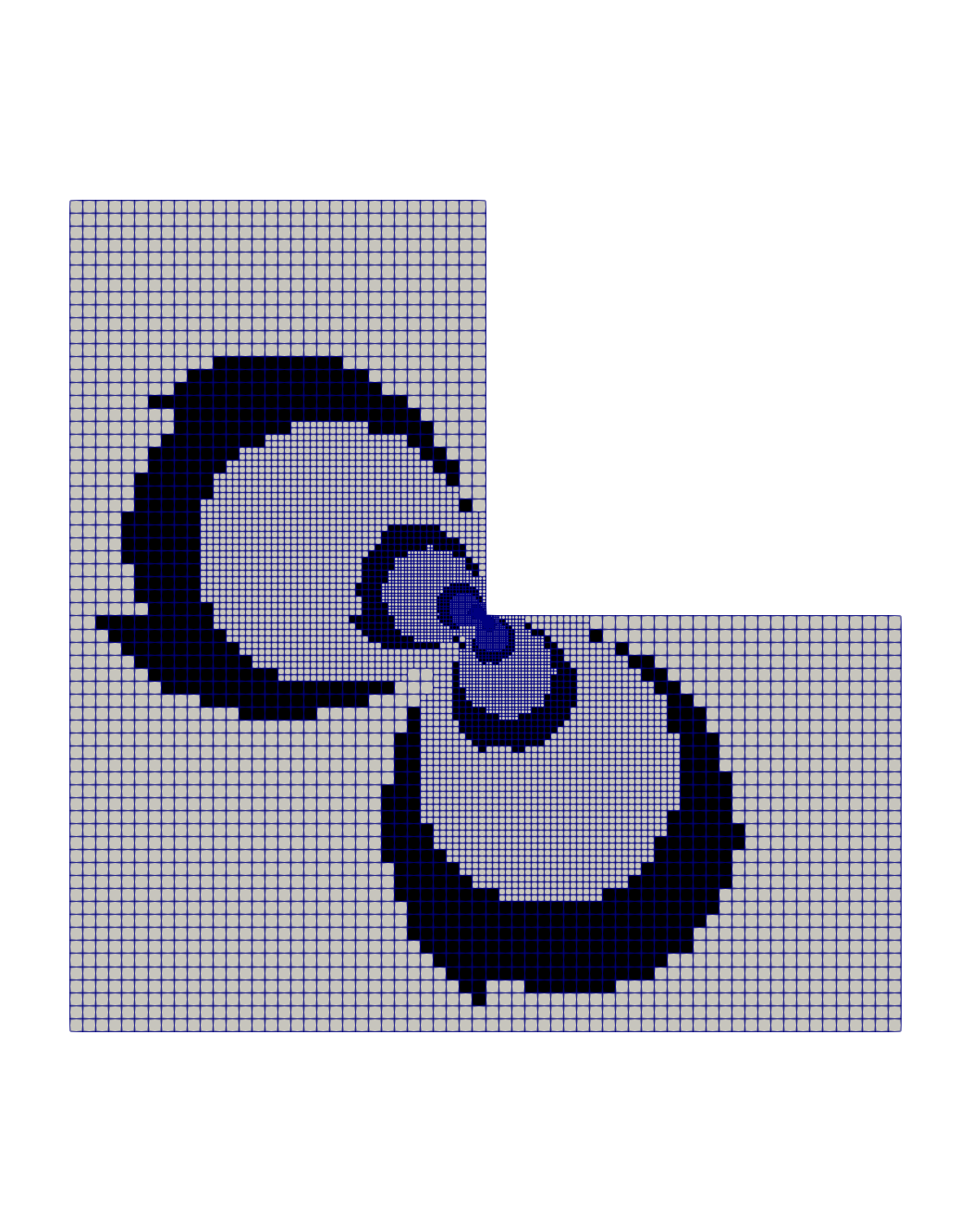}
  \hfill
  \includegraphics[width=.49\textwidth]{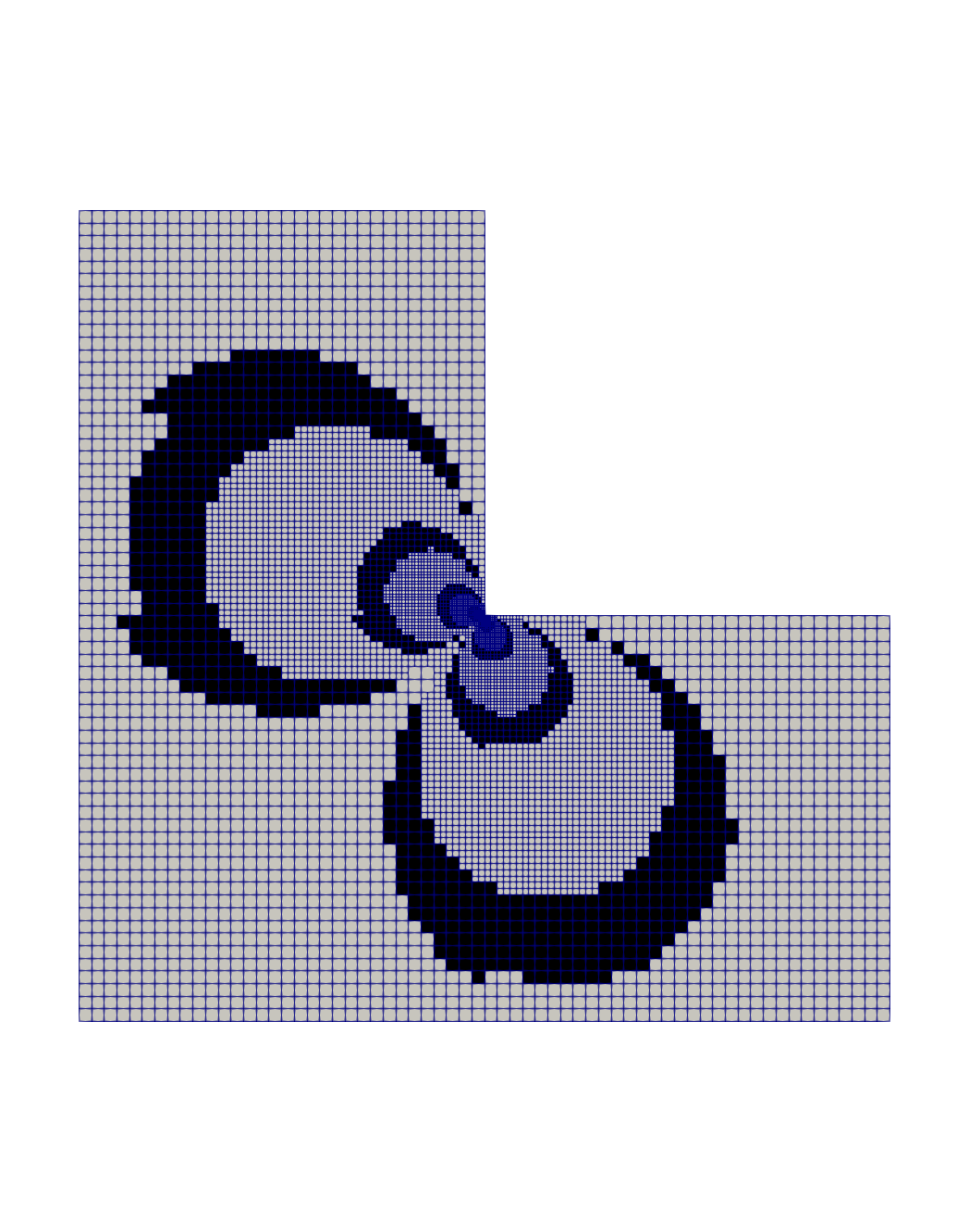}
  \caption{Comparison between the cells marked for refinement in AFEM (left) and S-AFEM (right) after $5$ cycles for the L-shaped domain Problem in 2D.}
  \label{fig:comparisonmarkcorner_2d}
\end{figure}

In Figures~\ref{fig:peak-2d-error} and~\ref{fig:corner-2d-error}  \begin{figure}\centering\resizebox{.65\linewidth}{!}{
  \ErrorAndEstimatorSAFEM{peak_2d_error}{
    \logLogSlopeTriangleReversed{0.82}{0.3}{0.08}{.5}{black}{$\frac12$}; }}
  \caption{Values of the total error in the $H^1$ semi-norm and of the error estimator for each loop of the classical AFEM $(| u-u_h|_1$ and $J(u_h))$ and S-AFEM with $\ell=3$ smoothing iterations ($| u-u^{\ell}_h|_1$ and $J(u^{\ell}_h)$) for the Peak Problem in 2D.}
  \label{fig:peak-2d-error}
\end{figure}
   \begin{figure}\centering\resizebox{.65\linewidth}{!}{
   \ErrorAndEstimator{peak_2d_afem_error}{
     \logLogSlopeTriangle{0.92}{0.3}{0.4}{1/2}{black}{$\frac12$}; }}
  \caption{Values of the total error $|u-u_h|_1$ and error estimator $J(u_h)$ for the Peak Problem in 2D, using classical AFEM.}
  \label{fig:peak-2d-error-afem}
\end{figure} we compare the values of the global estimators $J(u_h)$ and $J(u_h^{\ell})$ and of the $H_1$ semi-norm of the total errors for each cycle for the Peak Problem, and for the L-shaped domain Problem respectively, when using S-AFEM. For reference, Figures~\ref{fig:peak-2d-error-afem} and~\ref{fig:corner-2d-error-afem} show the error and the estimator in the classical AFEM algorithm for the two examples. Notice that the first step of AFEM and of S-AFEM are the same. The last step in the S-AFEM case shows comparable results as in the AFEM algorithm for both examples.  

Notice that in S-AFEM the value of the global estimator is almost the same of the one that would be obtained by solving using CG preconditined with AMG ($J(u_h)$ in Figures~\ref{fig:peak-2d-error} and~\ref{fig:corner-2d-error}), showing that in the two dimensional setting the error estimator~\eqref{edge} is mainly affected by the high frequencies of the discrete solution, which are well captured with just a few Richardson iterations.  On the other hand, the total error increases in the intermediate cycles, due to the algebraic error that has been accumulated by applying smoothing iterations instead of solving the algebraic problem until convergence, as quantified by Theorem~\ref{teo1errore}. This error measures the distance between the exact algebraic solution and the smooth non-oscillatory components of the approximate solution that are not captured by Richardson iteration, and have little or no influence on the error estimator, and therefore on the generated grid. After ten cycles, we solve the algebraic problem until converge using CG and AMG, as in the first cycle, and we obtain a solution whose error is controlled  by the estimator, as expected.

\begin{figure}\centering\resizebox{.70\linewidth}{!}{
  \ErrorAndEstimatorSAFEM{corner_2d_error}{
    \logLogSlopeTriangleReversed{0.8}{0.3}{0.1}{1/2}{black}{$\frac12$}; }}
  \caption{Values of the total error $H^1$ semi-norm and of the error estimator for each loop of the classical AFEM $(| u-u_h|_1$ and $J(u_h))$ and S-AFEM with $\ell=3$ smoothing iterations ($| u-u^{\ell}_h|_1$ and $J(u^{\ell}_h)$) for the L-shaped domain Problem in 2D.}
  \label{fig:corner-2d-error}
\end{figure}

\begin{figure}
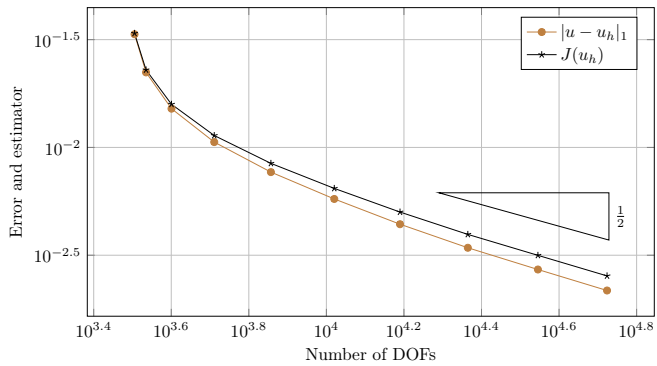
\centering\resizebox{.70\linewidth}{!}{
   \ErrorAndEstimator{corner_2d_afem_error}{
     \logLogSlopeTriangle{0.92}{0.3}{0.4}{1/2}{black}{$\frac12$}; }}
  \caption{Values of the total error $|u-u_h|_1$ and error estimator $J(u_h)$ for the L-shaped domain Problem in 2D, using classical AFEM.}
  \label{fig:corner-2d-error-afem}
\end{figure}

\subsection{Three-dimensional examples: pure diffusion, bi-linear case, Richardson smoother}

\paragraph{Smooth domain, peak right hand side}
The first three-dimensional test case that we propose is a model problem on a cube domain, where the forcing term contains a peak in a specified point that forces the exact solution to be given by
\begin{equation}
  \label{eq:peak-solution-3d}
  u(x,y,z) = x(x-1)y(y-1)z(z-1)e^{-100\big( (x-0.5)^2+(y-0.117)^2+(z-0.331)^2\big)}
\end{equation}
as shown in Figure~\ref{fig:peak-3d-solution}.
\begin{figure}
  \centering
  \includegraphics[width=.60\linewidth]{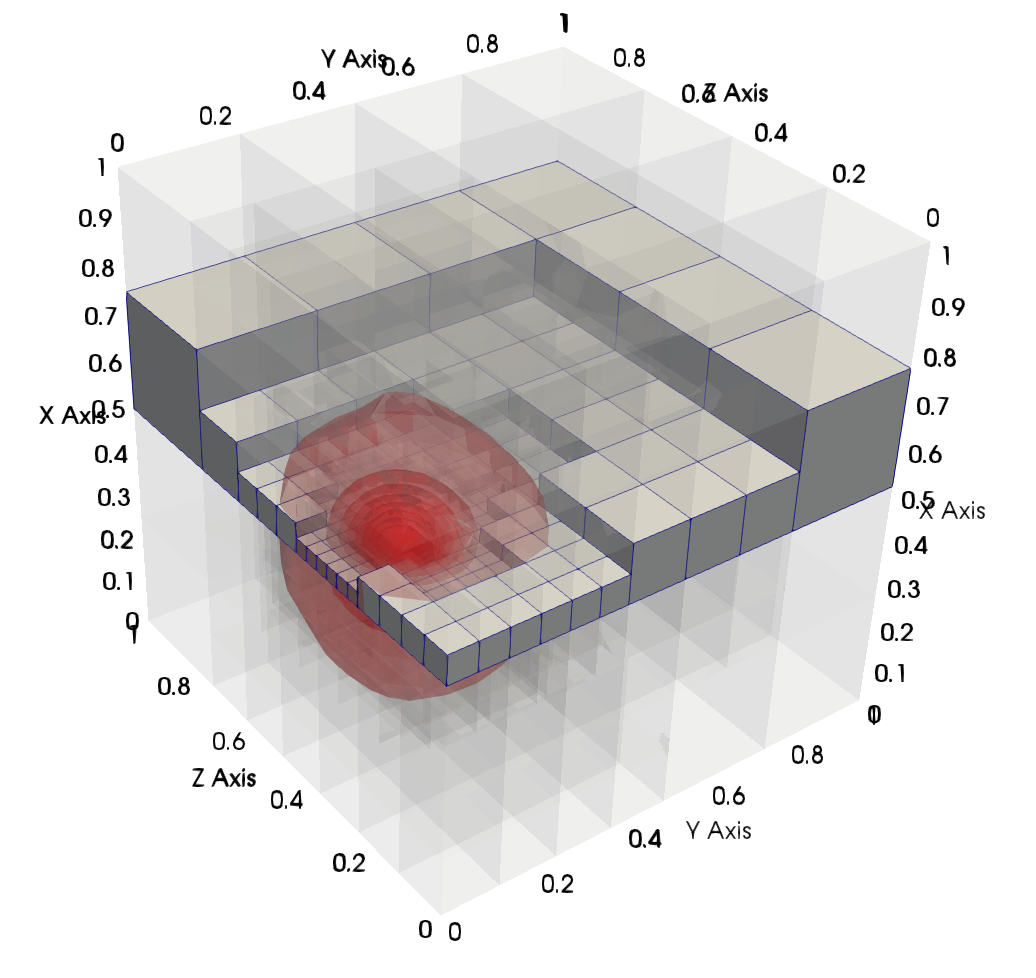}
  \caption{Solution to the Peak Problem~\eqref{eq:peak-solution-3d} in 3D.}
  \label{fig:peak-3d-solution}
\end{figure}

\paragraph{Fichera corner domain, smooth right hand side}
In the second three-dimensional example, we consider the classic Fichera corner domain, i.e., a cube where the upper right corner is removed, and the reentrant corner coincides with the origin. We set the exact solution to be 
\begin{equation}
  \label{eq:fichera-corner-3d}
  u(r,\theta, \phi) = r^{1/2},
\end{equation}
and we add a forcing term that induces the above exact solution
\begin{figure}
  \centering
  \includegraphics[width=.60\linewidth]{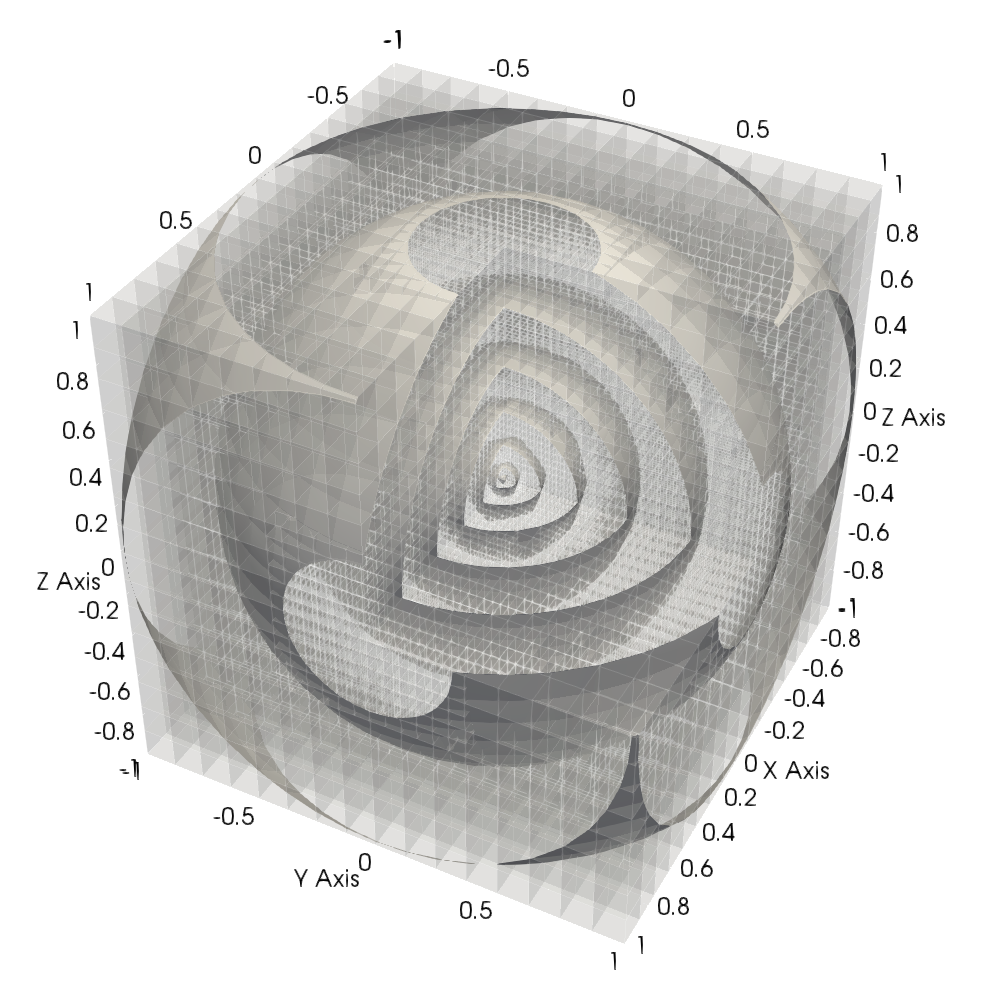}
  \caption{Solution to the Fichera domain Problem~\eqref{eq:fichera-corner-3d} in 3D.}
  \label{fig:corner-3d-solution}
\end{figure}
as shown in Figure~\ref{fig:corner-3d-solution}.

In both examples, the estimator applied to the algebraic solution after three smoothing steps (see Figures~\ref{fig:peak-3d-error} \begin{figure}
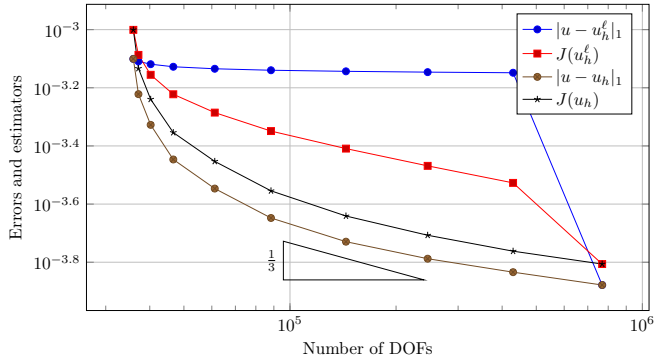

    \centering\resizebox{.70\linewidth}{!}{
   \ErrorAndEstimatorSAFEM{peak_3d_error}{
  \logLogSlopeTriangleReversed{0.6}{0.25}{0.1}{1/3}{black}{$\frac13$}; }}
  \caption{Values of the total error $H^1$ semi-norm and of the error estimator for each loop of the classical AFEM $(| u-u_h|_1$ and $J(u_h))$ and S-AFEM with $\ell=3$ smoothing iterations ($| u-u^{\ell}_h|_1$ and $J(u^{\ell}_h)$) for the Peak Problem in 3D.}
  \label{fig:peak-3d-error}
\end{figure} \begin{figure}
   \centering\resizebox{.70\linewidth}{!}{
   \ErrorAndEstimator{peak_3d_afem_error}{
  \logLogSlopeTriangle{0.92}{0.3}{0.4}{1/3}{black}{$\frac13$}; }}
  \caption{Values of the total error $|u-u_h|_1$ and error estimator $J(u_h)$ for the Peak Problem in 3D, using classical AFEM.}
  \label{fig:peak-3d-error-afem}
\end{figure} and~\ref{fig:corner-3d-error}) \begin{figure}\centering\resizebox{.70\linewidth}{!}{
   \ErrorAndEstimatorSAFEM{corner_3d_error}{
     \logLogSlopeTriangleReversed{0.7}{0.3}{0.1}{1/3}{black}{$\frac13$}; }}
  \caption{Values of the total error $H^1$ semi-norm and of the error estimator for each loop of the classical AFEM $(| u-u_h|_1$ and $J(u_h))$ and S-AFEM with $\ell=3$ smoothing iterations ($| u-u^{\ell}_h|_1$ and $J(u^{\ell}_h)$) for the Fichera corner domain Problem in 3D.}
  \label{fig:corner-3d-error}
\end{figure}  \begin{figure}
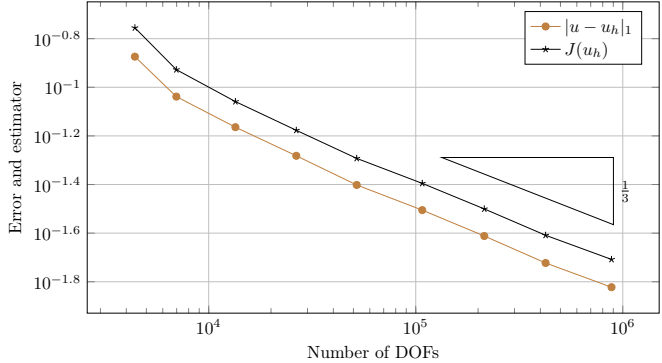
\centering\resizebox{.70\linewidth}{!}{
   \ErrorAndEstimator{corner_3d_afem_error}{
     \logLogSlopeTriangle{0.92}{0.3}{0.5}{1/3}{black}{$\frac13$}; }}
  \caption{Values of the total error $|u-u_h|_1$ and error estimator $J(u_h)$ for the Fichera corner Problem in 3D, using classical AFEM.}
  \label{fig:corner-3d-error-afem}
\end{figure} seems to be more sensitive to the low frequency content of $u_h^{\ell}$. 

For reference, Figures~\ref{fig:peak-3d-error-afem} and~\ref{fig:corner-3d-error-afem} show the error and the estimator in the classical AFEM algorithm for the two examples. In the three-dimensional case Lemma~\ref{theoremmain1} and Theorem~\ref{teo1errore} provide a sharper estimate, and we don't observe the same behaviour as in the two-dimensional case (i.e., $J(u_h^\ell)$ does not seem to remain close to $J(u_h)$). Nonetheless, the difference in accuracy at the final step between AFEM and S-AFEM is negligible also in this case, showing that the  differences in the refinement patterns between AFEM and S-AFEM remain small and do not hinder the final accuracy. 

\subsection{Robustness with respect to approximation degree, smoothing algorithms, and number of smoothing steps}

We now consider different variants of our S-AFEM algorithm, where we apply a different number of smoothing iterations, and different smoother types in the intermediate steps (respectively,  Richardson iteration, CG, and GMRES smoothers), for high order finite element discretizations of the pure diffusion case. 

We apply both AFEM and S-AFEM to the two-dimensional Corner Problem~\eqref{eq:fichera-corner-2d} and we show a comparison for different fixed FEM degrees, as $deg=1,2, 3$, and for different choices of smoothers for the intermediate cycles, respectively Richardson  iteration, the CG method, and the GMRES method. For all cases, we plot the value of the error estimator $J$ and the value of the $| \cdot|_1$ semi-norm of the total error, as the number of smoothing iterations $\ell$ increases from $1$ to $5$ in Figures~\ref{fig:corner_2d_deg1}-\ref{fig:corner_2d_deg3_gmres}.

For bi-linear finite elements (cf.~Figures~\ref{fig:corner_2d_deg1}--\ref{fig:corner_2d_deg1_gmres}), all considered smoothers (Richardson, CG, and GMRES) turned out to be good. In all cases, the estimator $J(u_h^{\ell})$  with $\ell=1, 3, 5$ exhibits the same behaviour (same order of convergence) of the estimator $J(u_h)$, showing that one or two smoothing iterations would be enough for the intermediate cycles. When we look at the total error, the CG behaves better leading to less error accumulated at the intermediate levels as shown in Figure~\ref{fig:corner_2d_deg1_cg}, while Richardson behaves the worst (cf.~Figure~\ref{fig:corner_2d_deg1}). Nevertheless, in all cases the accuracy of the final approximation for the last cycle obtained by S-AFEM, is almost the same to the one that is generated by classical AFEM.

As the polynomial degree increases, we observed that the D\"orfler marking strategy does not provide good refinement patterns for the different problems, unless a fine tuning is made on the marking parameter. Using the same value for $\Theta$ used for degree one, no cells are marked for refinement in higher order finite elements, making the choice for this parameter too much problem dependent and polynomial degree dependent. As an alternative marking strategy, we opted for a marking criterion where a fraction of $1/3$ of the cells with the largest error indicators are selected for refinement, leading to an increase of the number of degrees of freedom of roughly a factor of two in each refinement cycle, independently on the problem type. 

For $deg=2$, all smoothers work well exhibiting a quasi optimal convergence order compared to classical AFEM, and the accuracy of the final approximation is almost the same, as shown in 
Figures~\ref{fig:corner_2d_deg2},~\ref{fig:corner_2d_deg2_cg}, and~\ref{fig:corner_2d_deg2_gmres}.

For $deg=3$ and higher (only $deg=3$ is shown here; see~\cite{mulita2019} for the full set of simulations), Richardson iteration turns out to be a bad smoother for S-AFEM, unless further tuning of the relaxation parameter $\omega$ is performed. Although $J(u_h^\ell$) seems to exhibit the same behaviour of $J(u_h)$, as the smoothing iteration count $\ell$ increases, contrarily to what one might expect, the value of the estimator increases with increasing DOFs (Figure~\ref{fig:corner_2d_deg3}), showing that our selection of $\omega$ may not be correct for these problems, and we should estimate in a better way the spectral radius of the final matrix $A$, and modify $\omega$ accordingly.

On the other hand, both the CG method and the GMRES method turn out to be good smoothers for S-AFEM, without the need to tune any parameter, as evidenced in Figures~\ref{fig:corner_2d_deg2_cg},~\ref{fig:corner_2d_deg2_gmres},~\ref{fig:corner_2d_deg3_cg},~\ref{fig:corner_2d_deg3_gmres}. In all cases, in fact, $J(u_h^\ell)$, for $\ell\ge 2$, shows the same optimal convergence rate as the $J(u_h)$ obtained by the classical AFEM, and although the total error at the intermediate cycles is evident, the accuracy of the final approximations is almost the same. 

\confrontoTwoD{corner_2d}{1}{}{Richardson}{corner problem in 2D}

\confrontoTwoD{corner_2d}{1}{_cg}{CG}{corner problem in 2D}

\confrontoTwoD{corner_2d}{1}{_gmres}{GMRES}{corner problem in 2D}

\confrontoTwoD{corner_2d}{2}{}{Richardson}{corner problem in 2D}
\confrontoTwoD{corner_2d}{2}{_cg}{CG}{corner problem in 2D}
\confrontoTwoD{corner_2d}{2}{_gmres}{GMRES}{corner problem in 2D}

\confrontoTwoD{corner_2d}{3}{}{Richardson}{corner problem in 2D}
\confrontoTwoD{corner_2d}{3}{_cg}{CG}{corner problem in 2D}
\confrontoTwoD{corner_2d}{3}{_gmres}{GMRES}{corner problem in 2D}

\afterpage{\clearpage}

\subsection{Two dimensional drift-diffusion problem}
\label{sec:safemdiffuciontransport}
We proceed with testing the accuracy of S-AFEM for a class of drift-diffusion problems where the transport term $\boldsymbol{\beta}$ is non-negligible. We consider a two-dimensional problem, where $\boldsymbol{\beta}=(\beta, \beta)^T$, and the scalar parameter $\beta$ takes the values $1,10$ and $50$. In particular, we impose boundary conditions and forcing terms so that the exact solution is
\begin{equation}\label{eq:diffusionetrasporto-exact}
    x + y + \frac{- e^{\beta x} + 1}{e^{\beta} - 1} - \frac{e^{\beta y} - 1}{e^{\beta} - 1}.
\end{equation}

Since the linear system associated to Problem~\eqref{eq:diffusionetrasportogenerale} when $\boldsymbol{\beta}\neq0$ is not symmetric, we use as a solver and as a smoother the GMRES method (cf. eg. \cite{liesen2013krylov}). We apply classic AFEM and S-AFEM for both bi-linear and higher order finite element discretizations for $deg=1,2,3$. We plot the value of the estimator $J$ and the value of the $|\cdot|_1$ semi-norm of the total error, using a fixed number of GMRES iterations as a smoother, with $\ell=1,3,5$.

For all the cases where the transport term $\boldsymbol{\beta} = (1, 1)^T$ is small, the behaviour of the estimator for S-AFEM is exactly the same as the one given by AFEM, for the case $deg=1$, as shown in Figure~\ref{fig:drift_diffusion_2d_beta1_deg1_gmres}, while it approaches it as the GMRES smoothing iteration count increases for higher order FEM discretizations (i.e. for $deg\ge2$), as shown in Figures~\ref{fig:drift_diffusion_2d_beta1_deg2_gmres}, and~\ref{fig:drift_diffusion_2d_beta1_deg3_gmres}. However, in all cases, the accuracy of the final approximations is almost the same. 

For the choices of the transport term $\boldsymbol{\beta} = (10, 10)^T$ (corresponding to moderate transport) and $\boldsymbol{\beta} = (50, 50)^T$ (corresponding to large transport) again the behaviour of the estimator for S-AFEM is exactly the same as the one given by AFEM for the case $deg=1$, as shown in Figures~\ref{fig:drift_diffusion_2d_beta10_deg1_gmres} and~\ref{fig:drift_diffusion_2d_beta50_deg1_gmres}, while it approaches it as the GMRES smoothing iteration count increases for higher order FEM discretizations, as evidenced in Figures~\ref{fig:drift_diffusion_2d_beta10_deg2_gmres},~\ref{fig:drift_diffusion_2d_beta10_deg3_gmres},~\ref{fig:drift_diffusion_2d_beta50_deg2_gmres}, and~\ref{fig:drift_diffusion_2d_beta50_deg3_gmres}. In all cases, however, the accuracy of the final approximations is almost the same, showing that S-AFEM turns out to be a good method also for drift-diffusion problems.

\confrontoTwoD{drift_diffusion_2d_beta1}{1}{_gmres}{GMRES}{drift-diffusion problem in 2D, with transport $\boldsymbol{\beta}=(1,1)$}
\confrontoTwoD{drift_diffusion_2d_beta1}{2}{_gmres}{GMRES}{drift-diffusion problem in 2D, with transport $\boldsymbol{\beta}=(1,1)$}
\confrontoTwoD{drift_diffusion_2d_beta1}{3}{_gmres}{GMRES}{drift-diffusion problem in 2D, with transport $\boldsymbol{\beta}=(1,1)$}

\confrontoTwoD{drift_diffusion_2d_beta10}{1}{_gmres}{GMRES}{drift-diffusion problem in 2D, with transport $\boldsymbol{\beta}=(10,10)$}
\confrontoTwoD{drift_diffusion_2d_beta10}{2}{_gmres}{GMRES}{drift-diffusion problem in 2D, with transport $\boldsymbol{\beta}=(10,10)$}
\confrontoTwoD{drift_diffusion_2d_beta10}{3}{_gmres}{GMRES}{drift-diffusion problem in 2D, with transport $\boldsymbol{\beta}=(10,10)$}

\confrontoTwoD{drift_diffusion_2d_beta50}{1}{_gmres}{GMRES}{drift-diffusion problem in 2D, with transport $\boldsymbol{\beta}=(50,50)$}        
\confrontoTwoD{drift_diffusion_2d_beta50}{2}{_gmres}{GMRES}{drift-diffusion problem in 2D, with transport $\boldsymbol{\beta}=(50,50)$}        
\confrontoTwoD{drift_diffusion_2d_beta50}{3}{_gmres}{GMRES}{drift-diffusion problem in 2D, with transport $\boldsymbol{\beta}=(50,50)$}        
        
\afterpage{\clearpage}

\subsection{Computational costs}

In the following table we show a comparison of the computational cost associated to the classical AFEM and to the smoothed AFEM in the pure diffusion case, for the first four examples we presented in this section. 

The results were obtained on a 2.8 GHz Intel Core i7 with 4 cores and 16GB of RAM, using MPI parallelization on all 4 cores.

\begin{table}
    \centering
    \resizebox{\textwidth}{!}{
    \begin{tabular}{l|r|r|r|r}    
       & Peak 2D & L-shaped 2D & Peak 3D & Fichera 3D \\
       \hline
       First and last solve (same for AFEM and S-AFEM) &0.0187s  & 0.0601s & 32s & 101s \\
       \hline
       \phantom{S-}AFEM Intermediate solves (CG) & 0.0663s &0.219s  & 76.4s &185s \\
       \hline
       S-AFEM Intermediate smoothing steps (Richardson)  & 0.005s & 0.00892s &  0.252s & 0.426s\\
       \hline
       S-AFEM intermediate speedup & 13.26 & 24.6 & 303.7 & 434.3 \\
       \hline
       S-AFEM total speedup & 3.59 & 4.045 & 3.361 & 2.819 
    \end{tabular}
    }
    \caption{Comparison of the computational cost of the solution stage for ten cycles of adaptive refinement using classical AFEM and S-AFEM on bi-linear elements, for the pure diffusion case.}
    \label{tab:computational-cost}
\end{table}

Table~\ref{tab:computational-cost} only shows the comparison between AFEM and S-AFEM in the solve phase, where S-AFEM is always faster than AFEM, offering an average speedup of a factor three. In the table we compare the computational cost of all intermediate cycles in S-AFEM (Intermediate smoothing steps (Richardson) in the table), with the corresponding computational cost for standard AFEM (Intermediate solves (CG) in the table). The first and last solve are the same in the two algorithms, and are reported to provide a scaling with respect to the total computational cost of the solution phase in the program. Other phases (like graphical output, mesh setup, assembling setup, and error estimation) are not shown since they are identical in the two algorithms.

\section{Conclusions}
\label{conclusions}

This work proposes a new smoothed algorithm for adaptive finite element methods (S-AFEM), inspired by multilevel techniques, where the exact algebraic solution in intermediate steps is replaced by the application of a prolongation step, followed by a fixed number of smoothing steps. 

The main argument behind the S-AFEM algorithm is that the combined application of the \Estimate-\Mark steps of AFEM is largely insensitive to substantial algebraic errors in low frequencies. Indeed, even though the intermediate solutions produced by S-AFEM are far from the exact algebraic solutions, we show that their a posteriori error estimation produces a refinement pattern for each cycle that is substantially equivalent to the one that would be generated by classical AFEM, leading roughly to the same set of cells marked for refinement. 

Our strategy is based on solving exactly the problem at the coarsest level and at the finest level, and then applying the \Estimate-\Mark-\Refine steps directly to the result of a smoother (\Smooth) in intermediate levels. 
 
We provide numerical evidences that the S-AFEM strategy is competitive in cost and accuracy by considering some variants of our algorithm, where different smoothers are used in the intermediate cycles (respectively Richardson iteration, the CG method, and the GMRES method). 

We conclude that, in general, CG and GMRES act as robust smoothers in S-AFEM also for high order approximations, and for non-symmetric problems, like, for example, drift-diffusion problems with dominant transport. 

Our numerical evidences show that two or three smoothing iterations are enough for the two-dimensional case, while three-dimensional problems require from five up to seven smoothing iteration in order to produce good final approximations, independently on the polynomial degree of the finite element approximation.

We analyzed the error propagation properties of the S-AFEM algorithm, and provided a bound on the a-posteriori error estimator applied to the approximated algebraic solution. The results are not sharp, and do not provide a definitive answer on the convergence of the final S-AFEM solution to the AFEM one, but could be used as a ground state for further investigation, which is currently ongoing.

\section*{Acknowledgments}

LH is partially supported by the Italian Ministry of Instruction, University and Research (MIUR), under the 2017 PRIN project NA-FROM-PDEs MIUR PE1, ``Numerical Analysis for Full and Reduced Order Methods for the efficient and accurate solution of complex systems governed by Partial Differential Equations''. OM is thankful to Durham University for the hospitality during her visiting research there.

\bibliographystyle{abbrv}
\bibliography{bibliography}

\begin{thebibliography}{10}

\bibitem{adams2003sobolev}
R.~A. Adams and J.~J. Fournier.
\newblock {\em Sobolev spaces}, volume 140.
\newblock Academic press, 2003.

\bibitem{ainsworth2011posteriori}
M.~Ainsworth and J.~T. Oden.
\newblock {\em A posteriori error estimation in finite element analysis},
  volume~37.
\newblock John Wiley \& Sons, 2011.

\bibitem{dealii9.0}
G.~Alzetta, D.~Arndt, W.~Bangerth, V.~Boddu, B.~Brands, D.~Davydov,
  R.~Gassmoeller, T.~Heister, L.~Heltai, K.~Kormann, M.~Kronbichler, M.~Maier,
  J.-P. Pelteret, B.~Turcksin, and D.~Wells.
\newblock The \texttt{deal.II} library, version 9.0.
\newblock {\em Journal of Numerical Mathematics}, 26(4):173--183, 2018.

\bibitem{arioli2013stopping}
M.~Arioli, E.~H. Georgoulis, and D.~Loghin.
\newblock Stopping criteria for adaptive finite element solvers.
\newblock {\em SIAM Journal on Scientific Computing}, 35(3):A1537--A1559, 2013.

\bibitem{arioli2013interplay}
M.~Arioli, J.~Liesen, A.~Mi{\c{c}}dlar, and Z.~Strako{\v{s}}.
\newblock Interplay between discretization and algebraic computation in
  adaptive numerical solutionof elliptic pde problems.
\newblock {\em GAMM-Mitteilungen}, 36(1):102--129, 2013.

\bibitem{ArndtBangerthClevenger-2019-a}
D.~Arndt, W.~Bangerth, T.~C. Clevenger, D.~Davydov, M.~Fehling,
  D.~Garcia-Sanchez, G.~Harper, T.~Heister, L.~Heltai, M.~Kronbichler,
  R.~{Maguire Kynch}, M.~Maier, J.~P. Pelteret, B.~Turcksin, and D.~Wells.
\newblock {The deal.II Library, Version 9.1}.
\newblock {\em Journal of Numerical Mathematics}, 2019.

\bibitem{Daniel-Arndt-2020-a}
D.~Arndt, W.~Bangerth, D.~Davydov, T.~Heister, L.~Heltai, M.~Kronbichler,
  M.~Maier, J.-P. Pelteret, B.~Turcksin, and D.~Wells.
\newblock The deal.ii finite element library: Design, features, and insights.
\newblock {\em Computers and Mathematics with Applications}, 2020.

\bibitem{babuvska1978posteriori}
I.~Babu{\v{s}}ka and W.~C. Rheinboldt.
\newblock A-posteriori error estimates for the finite element method.
\newblock {\em International Journal for Numerical Methods in Engineering},
  12(10):1597--1615, 1978.

\bibitem{dealii2007}
W.~Bangerth, R.~Hartmann, and G.~Kanschat.
\newblock {deal.II} -- a general purpose object oriented finite element
  library.
\newblock {\em ACM Trans. Math. Softw.}, 33(4):24/1--24/27, 2007.

\bibitem{becker1995adaptive}
R.~Becker, C.~Johnson, and R.~Rannacher.
\newblock Adaptive error control for multigrid finite element.
\newblock {\em Computing}, 55(4):271--288, 1995.

\bibitem{becker2009convergence}
R.~Becker and S.~Mao.
\newblock Convergence and quasi-optimal complexity of a simple adaptive finite
  element method.
\newblock {\em ESAIM: Mathematical Modelling and Numerical Analysis},
  43(6):1203--1219, 2009.

\bibitem{bramble2018multigrid}
J.~H. Bramble.
\newblock {\em Multigrid methods}.
\newblock Routledge, 2018.

\bibitem{bramble2000analysis}
J.~H. Bramble and X.~Zhang.
\newblock The analysis of multigrid methods.
\newblock {\em Handbook of numerical analysis}, 7:173--415, 2000.

\bibitem{carstensen1999quasi}
C.~Carstensen.
\newblock Quasi-interpolation and a posteriori error analysis in finite element
  methods.
\newblock {\em ESAIM: Mathematical Modelling and Numerical Analysis},
  33(6):1187--1202, 1999.

\bibitem{ciarlet2002finite}
P.~G. Ciarlet.
\newblock The finite element method for elliptic problems.
\newblock {\em Classics in applied mathematics}, 40:1--511, 2002.

\bibitem{daniel2020adaptive}
P.~Daniel, A.~Ern, and M.~Vohral{\'\i}k.
\newblock An adaptive hp-refinement strategy with inexact solvers and
  computable guaranteed bound on the error reduction factor.
\newblock {\em Computer Methods in Applied Mechanics and Engineering},
  359:112607, 2020.

\bibitem{daniel2020guaranteed}
P.~Daniel and M.~Vohral{\'\i}k.
\newblock Guaranteed contraction of adaptive inexact $ hp $-refinement
  strategies with realistic stopping criteria.
\newblock 2020.

\bibitem{dorfler1996convergent}
W.~D{\"o}rfler.
\newblock A convergent adaptive algorithm for poisson's equation.
\newblock {\em SIAM Journal on Numerical Analysis}, 33(3):1106--1124, 1996.

\bibitem{griebel1995abstract}
M.~Griebel and P.~Oswald.
\newblock On the abstract theory of additive and multiplicative schwarz
  algorithms.
\newblock {\em Numerische Mathematik}, 70(2):163--180, 1995.

\bibitem{hackbusch1994iterative}
W.~Hackbusch.
\newblock {\em Iterative solution of large sparse systems of equations},
  volume~95.
\newblock Springer, 1994.

\bibitem{hackbusch2013multi}
W.~Hackbusch.
\newblock {\em Multi-grid methods and applications}, volume~4.
\newblock Springer Science \& Business Media, 2013.

\bibitem{janssen2011adaptive}
B.~Janssen and G.~Kanschat.
\newblock Adaptive multilevel methods with local smoothing for h\^{}1-and
  h\^{}curl-conforming high order finite element methods.
\newblock {\em SIAM Journal on Scientific Computing}, 33(4):2095--2114, 2011.

\bibitem{jiranek2010posteriori}
P.~Jir{\'a}nek, Z.~Strako{\v{s}}, and M.~Vohral{\'\i}k.
\newblock A posteriori error estimates including algebraic error and stopping
  criteria for iterative solvers.
\newblock {\em SIAM Journal on Scientific Computing}, 32(3):1567--1590, 2010.

\bibitem{liesen2013krylov}
J.~Liesen and Z.~Strakos.
\newblock {\em Krylov subspace methods: principles and analysis}.
\newblock Oxford University Press, 2013.

\bibitem{MALLIK2020112367}
G.~Mallik, M.~Vohral{\'\i}k, and S.~Yousef.
\newblock Goal-oriented a posteriori error estimation for conforming and
  nonconforming approximations with inexact solvers.
\newblock {\em Journal of Computational and Applied Mathematics}, 366:112367,
  2020.

\bibitem{miracci2019multilevel}
A.~Mira{\c{c}}i, J.~Pape{\v{z}}, and M.~Vohral{\'\i}k.
\newblock A multilevel algebraic error estimator and the corresponding
  iterative solver with $ p $-robust behavior.
\newblock 2019.

\bibitem{mulita2019}
O.~Mulita.
\newblock {\em Smoothed Adaptive Finite Element Methods}.
\newblock PhD thesis, International School for Advanceed Studies, 2019.

\bibitem{papevz2014distribution}
J.~Pape{\v{z}}, J.~Liesen, and Z.~Strako{\v{s}}.
\newblock Distribution of the discretization and algebraic error in numerical
  solution of partial differential equations.
\newblock {\em Linear Algebra and its Applications}, 449:89--114, 2014.

\bibitem{papevz2017sharp}
J.~Pape{\v{z}}, U.~R{\"u}de, M.~Vohral{\'\i}k, and B.~Wohlmuth.
\newblock Sharp algebraic and total a posteriori error bounds for $ h $ and $ p
  $ finite elements via a multilevel approach.
\newblock 2017.

\bibitem{papevz2017residual}
J.~Pape{\v{z}} and Z.~Strako{\v{s}}.
\newblock On a residual-based a posteriori error estimator for the total error.
\newblock {\em IMA Journal of Numerical Analysis}, 2017.

\bibitem{papevz2018estimating}
J.~Pape{\v{z}}, Z.~Strako{\v{s}}, and M.~Vohral{\'\i}k.
\newblock Estimating and localizing the algebraic and total numerical errors
  using flux reconstructions.
\newblock {\em Numerische Mathematik}, 138(3):681--721, 2018.

\bibitem{saad2003iterative}
Y.~Saad.
\newblock {\em Iterative methods for sparse linear systems}, volume~82.
\newblock siam, 2003.

\bibitem{sartori18}
A.~Sartori, N.~Giuliani, M.~Bardelloni, and L.~Heltai.
\newblock deal2lkit: A toolkit library for high performance programming in
  deal.ii.
\newblock {\em SoftwareX}, 7:318--327, 2018.

\bibitem{verfurth1996review}
R.~Verf{\"u}rth.
\newblock {\em A review of a posteriori error estimation and adaptive
  mesh-refinement techniques}.
\newblock John Wiley \& Sons Inc, 1996.

\bibitem{xu1992iterative}
J.~Xu.
\newblock Iterative methods by space decomposition and subspace correction.
\newblock {\em SIAM review}, 34(4):581--613, 1992.

\end{thebibliography}
\end{document}